\numberwithin{equation}{section}
\theoremstyle{plain}
\newtheorem{thm}{Theorem}[section]
\newtheorem{lemma}[thm]{Lemma}
\newtheorem{prop}[thm]{Proposition}
\newtheorem{coroll}[thm]{Corollary}
\theoremstyle{definition}
\newtheorem{definition}[thm]{Definition}
\newtheorem{remark}[thm]{Remark}
\newtheorem*{notation}{Notation}
\newcommand{\bigslant}[2]{{\raisebox{.2em}{$#1$}\left/\raisebox{-.2em}{$#2$}\right.}}
\def\Pic{{\rm Pic}}
\def\Div{{\rm Div}}
\def\deg{{\rm deg}}
\def\Arb{{\rm Arb}}
\def\Im{{\rm Im}}
\def\per{{\rm per}}
\def\ps@pprintTitle{%
	\let\@oddhead\@empty
	\let\@evenhead\@empty
	\def\@oddfoot{\reset@font\hfil\thepage\hfil}
	\let\@evenfoot\@oddfoot
}
\begin{document}

\title{Algorithmic aspects of rotor-routing and the notion of linear equivalence}

\author{Lilla T\'othm\'er\'esz}
\ead{tmlilla@cs.elte.hu}

\address{Department of Computer Science, E\"otv\"os Lor\'and University, P\'azm\'any P\'eter s\'et\'any 1/C, Budapest H-1117, Hungary}

\date{}

\begin{abstract}
We define the analogue of linear equivalence of graph divisors for the rotor-router model, and use it to prove polynomial time computability of some problems related to rotor-routing.
Using the connection between linear equivalence for chip-firing and for rotor-routing, we give a simple proof for the fact that the number of rotor-router unicycle-orbits equals the order of the Picard group. We also show that the rotor-router action of the Picard group on the set of spanning in-arborescences can be interpreted in terms of the linear equivalence.
\end{abstract}

\begin{keyword}
rotor-routing \sep polynomial time decidability \sep linear equivalence \sep Picard group
\MSC[2010] 82C20 \sep 05C25 \sep 68Q25
\end{keyword}

\maketitle

\section{Introduction} \label{sec::intro}

Rotor-routing is a deterministic process that induces a walk of a chip on a directed graph.
It was introduced in the physics literature as a model of self-organized criticality \cite{Priez96,Rabani98,twoballs}.
The rotor walk can also be thought of as a derandomized random walk on a graph \cite{rr_and_markov}.

In this paper, we explore the relationship of rotor-routing with the chip-firing game, and the Picard group of the graph.
We analyze a generalized version of rotor-routing, where each vertex has an integer number of chips, which might also be negative. This model has sometimes been called the height-arrow model \cite{height-arrow04}. Rotor-routing in this setting becomes a one-player game analogous to chip-firing, where a vertex can make a step if it has a positive number of chips.

In Section \ref{sec::rec_char}, we characterize recurrent elements for the rotor-routing game. This result is a generalization of a result of Holroyd et al.~\cite{Holroyd08} that characterizes recurrent configurations with one chip.
A motivation for such a characterization is the fact that for the chip-firing game, no characterization is known for the recurrent elements on general digraphs.

In Section \ref{sec::lin_eqv}, we define the analogue of the notion of linear equivalence of the chip-firing game for the rotor-routing game.
We show that the linear equivalence notions of the two models are related in a simple way. Moreover, whether two configurations of the rotor-routing game are linearly equivalent can be decided in polynomial time.

We use this result to prove polynomial time decidability of the reachability problem for rotor-routing in a special case.  
In particular, we show, that it can be decided in polynomial time whether two unicycles lie in the same rotor-router orbit.
Using the relationship between linear equivalence for chip-firing and for rotor-routing, we give a simple bijective proof for the fact that the number of rotor-router unicycle orbits equals the order of the Picard group of the graph. (This fact also follows from a combination of previous results \cite[Theorem 1]{pham_rotor} and \cite[Theorem 2.10]{farrell-levine-coeulerian}, but they do not provide a bijection.) 
Finally, we show, that the rotor-router action of the Picard group on the set of spanning in-arborescences \cite{Holroyd08}
can also be interpreted in terms of the linear equivalence.
Using this interpretation, we show that
it can be checked in polynomial time, whether a given spanning in-arborescence is the image of another given arborescence by a given element of the Picard group.
Also using this interpretation, we give a simpler proof for the result of Chan et al.~\cite{Chan15} stating that the rotor-router action is independent of the base point if and only if all cycles in the graph are reversible.

\subsection{Basic notations}

Throughout this paper, \emph{digraph} means a directed graph, where multiple edges are allowed, but there are no loops. We will almost always assume our digraphs to be strongly connected.
For a digraph $G$, $V(G)$ denotes the set of vertices, and $E(G)$ denotes the set of edges.
For a directed edge $\overrightarrow{uv}$, $u$ is the \emph{tail}, and $v$ is the \emph{head}. The multiplicity of the edge $\overrightarrow{uv}$ is denoted by $d(u,v)$.
We denote the set of out-neighbors (in-neighbors) of a vertex $v$ by $\Gamma^+(v)$ ($\Gamma^-(v)$), the out-degree (in-degree) of a vertex $v$ by $d^+(v)$ ($d^-(v)$).

  For a digraph $G$ and vertex $w\in V(G)$ a \emph{spanning in-arbores\-cence} of $G$ rooted at $w$ is a subdigraph $G'$ such that
$d^+_{G'}(v)=1$ for each $v\in V(G)-w$, and the underlying undirected graph of $G'$ is a tree.

We denote by $\mathbb{Z}^{V(G)}$ the set of integer vectors indexed by the vertices of a digraph $G$.
We identify vectors in $\mathbb{Z}^{V(G)}$ with integer valued functions on $V(G)$. According to this, we  write $z(v)$ for the coordinate corresponding to vertex $v$ of a $z\in \mathbb{Z}^{V(G)}$.
We denote by $z\geq 0$ if a vector $z\in \mathbb{Z}^{V(G)}$ is coordinatewise nonnegative.
We use the notation $\mathbf{0}_G$ ($\mathbf{1}_G$) for the vector where each coordinate equals zero (one). We denote the characteristic vector of a vertex $v$ by $\mathbf{1}_v$.

\begin{definition}
The \emph{Laplacian matrix} of a digraph $G$ is the
following matrix $L_G\in \mathbb{Z}^{V(G)\times V(G)}$:
$$L_G(u,v) = \left\{\begin{array}{cl} -d^+(u) & \text{if } u=v, \\
        d(v, u) & \text{if } u\neq v.
      \end{array} \right.$$
\end{definition}

\begin{prop}\cite[Proposition 4.1 and 3.1]{BL92}
 For a strongly connected digraph $G$, there exists a unique vector $\per_G\in \mathbb{Z}^{V(G)}$ such that $L_G\per_G=\mathbf{0}_G$, the entries of $\per_G$ are strictly positive, and relatively prime. If $G$ is Eulerian, then $\per_G=\mathbf{1}_G$.
\end{prop}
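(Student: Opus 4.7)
The plan is to prove the proposition in three stages: compute the rank of $L_G$, exhibit an explicit positive integer kernel vector via the Matrix-Tree theorem, and handle the Eulerian case by a direct computation.

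First, I would establish that $L_G$ has rank exactly $|V(G)|-1$. The upper bound is immediate: for any column $v$, $\sum_u L_G(u,v) = -d^+(v) + \sum_{u\neq v} d(v,u) = -d^+(v) + d^+(v) = 0$, so $\mathbf{1}_G^{\top}$ lies in the left kernel. For the lower bound I would invoke the directed Matrix-Tree theorem: for a fixed vertex $w$, the principal minor obtained by deleting row and column $w$ from $-L_G$ equals the number $t_G(w)$ of spanning in-arborescences of $G$ rooted at $w$. Strong connectivity guarantees $t_G(w)\geq 1$, producing a nonzero $(n-1)\times(n-1)$ minor. Hence $\ker L_G$ is one-dimensional.

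Next, for existence of $\per_G$, I would use the standard consequence of the Matrix-Tree theorem that the vector $t_G := (t_G(v))_{v\in V(G)}$ of in-arborescence counts lies in $\ker L_G$; this is obtained by expanding the cofactor identity $L_G\cdot \mathrm{adj}(L_G) = \det(L_G)\,I = 0$ and interpreting the adjugate entries combinatorially. Strong connectivity forces $t_G$ to be coordinatewise positive, so dividing by $d := \gcd\{t_G(v) : v\in V(G)\}$ yields a positive integer vector with coprime coordinates. Uniqueness is then automatic: since $\ker L_G$ is one-dimensional, any other element is a rational multiple, and on the positive ray there is exactly one integer point with coprime coordinates.

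Finally, if $G$ is Eulerian then $d^+(v)=d^-(v)$ for every $v$, and a direct computation gives $(L_G\mathbf{1}_G)(u) = -d^+(u) + \sum_{v\neq u} d(v,u) = -d^+(u) + d^-(u) = 0$. Combined with the uniqueness statement, this forces $\per_G = \mathbf{1}_G$. The main obstacle is the sign-sensitive matrix-tree identity certifying that $t_G \in \ker L_G$; it is classical but bookkeeping-heavy, so I would cite \cite{BL92} rather than reproduce the cofactor manipulation in full.
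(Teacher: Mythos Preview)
The paper does not supply its own proof of this proposition; it is stated with a citation to \cite{BL92} and used as background. So there is nothing in the paper to compare your argument against, and your outline stands on its own.

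Your argument is the standard one and is correct. With the paper's sign convention, $-L_G$ has diagonal entries $d^+(v)$ and off-diagonal entries $-d(v,u)$, so its columns sum to zero (giving $\mathbf{1}_G$ in the left kernel of $L_G$), and the directed Matrix-Tree theorem indeed identifies the principal minor at $w$ with the number of spanning in-arborescences rooted at $w$, which is positive by strong connectivity. The adjugate trick then produces the positive integer kernel vector, and the one-dimensionality of the kernel forces uniqueness after dividing out the gcd. Your Eulerian computation is also compatible with the paper's convention. The only caveat is exactly the one you flag: checking that the cofactor signs line up so that the resulting kernel vector is genuinely coordinatewise positive (rather than alternating) requires a moment of care, and deferring that to the cited source is reasonable.
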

The vector $\per_G$ is called the primitive period vector of $G$.

\subsection{Chip-firing}

Chip-firing is a solitary game on a directed graph. 
The configurations of the game are called \emph{divisors}. A divisor $x$ is an integer vector indexed by 
the vertices of the graph, i.e.~$x\in \mathbb{Z}^{V(G)}$.
We think of $x(v)$ as the number of chips on vertex $v$ (which might be negative).
The \emph{degree} of a divisor is the sum of its entries: $\deg(x)=\sum_{v\in V(G)} x(v)$.
We denote the set of divisors on a digraph $G$ by $\Div(G)$, and the set of divisors 
of degree $k$ by $\Div^k(G)$. Note that $\Div(G)$ and $\Div^0(G)$ are Abelian groups with the coordinatewise addition.

The basic operation in the game is a \emph{firing} of a vertex. For a divisor $x$, firing a vertex $v$ means taking the new divisor 
$x'=x+L_G\mathbf{1}_v$, i.e, $v$ loses $d^+(v)$ chips, and each out-neighbor $u$ of $v$ receives $d(v,u)$ chips.
Note that a firing preserves the degree of the divisor.

The firing of a vertex $v$ is \emph{legal} with respect to the divisor $x$, if $x(v)\geq d^+(v)$, i.e, if the vertex $v$ has a nonnegative number of chips after the firing. (Note that other vertices might have a negative number of chips.)
A \emph{legal game} is a sequence of divisors in which each divisor is obtained from the previous one by a legal firing.

The following equivalence relation on $\Div(G)$, called \emph{linear equivalence}, plays an important role in the theory of chip-firing:
$x\sim y$ if there exists an integer vector $z\in \mathbb{Z}^{V(G)}$ such that $y=x+L_Gz$.
One can easily check that this is indeed an equivalence relation. As $\per_G$ is a strictly positive eigenvector of $L_G$ with eigenvalue zero, we can suppose that $z\geq 0$: We have $L_G(z+k\cdot \per_G)=L_Gz$ for any $k\in\mathbb{Z}$, and for a sufficiently large $k$, $z+k\cdot \per_G\geq 0$. Thus $x\sim y$ if and only if $y$ can be reached from $x$ by a sequence of (not necessarily legal) firings.

Note that the divisors linearly equivalent to $\mathbf{0}_G$ form a subgroup of $\Div^0(G)$ which is isomorphic to $\Im(L_G)$, the image of the linear operator on $\mathbb{Z}^{V(G)}$ corresponding to $L_G$.
The factor group of $\Div^0(G)$ by linear equivalence is called the \emph{Picard-group} of the graph:
\begin{equation*}
\Pic^0(G)=\bigslant{\Div^0(G)}{\Im(L_G)}.
\end{equation*}

\subsection{Rotor-routing}
\label{sec::rr_def}

The rotor-routing game is played on a ribbon digraph.
A \emph{ribbon digraph} is a digraph together with a fixed cyclic ordering of the outgoing edges from $v$ for each vertex $v$.
For an edge $e=\overrightarrow{vw}$, denote by $e^+$ the edge following $e$ in the cyclic order at $v$. From this point, we always assume that our digraphs have a ribbon digraph structure.

Let $G$ be a ribbon digraph.
 A \emph{rotor configuration} on $G$ is a function $\varrho$ that assigns to each non-sink  
vertex $v$ an out-edge with tail $v$. We call $\varrho(v)$ the \emph{rotor} at $v$.
 For a rotor configuration $\varrho$, we call the subgraph with edge set $\{\varrho(v): v\in V(G)\}$ the \emph{rotor subgraph}.

A configuration of the rotor-routing game is a pair $(x,\varrho)$, where $x\in\Div(G)$ is divisor, and $\varrho$ is a rotor configuration on $G$. We also call such pairs \emph{divisor-and-rotor configuration}, or just shortly DRC.

Given a configuration $(x,\varrho)$, a \emph{routing} at vertex $v$ results in the  
configuration $(x', \varrho')$, where
$\varrho'$ is the rotor configuration with
$$
\varrho'(u) = \left\{\begin{array}{cl} \varrho(u) & \text{if $u\neq v$,}  \\
         \varrho(u)^+ & \text{if $u=v$},
      \end{array} \right.
$$
and $x'=x-\mathbf{1}_v+\mathbf{1}_{v'}$ where $v'$ is the head of $\varrho'(v)$.

We call the routing at $v$ \emph{legal} (with respect to the configuration $(x,\varrho)$), if $x(v)>0$, i.e.~the routing at $v$ does not create a negative entry at $v$. Note that other vertices might have a negative number of chips. A \emph{legal game} is a sequence of configurations such that each configuration is obtained from the previous one by a legal routing.


An important special case of the rotor-routing game is when the initial configuration has a nonnegative divisor of degree one, i.e, one vertex has one chip, and the other vertices have zero chips. We call such a configuration a \emph{one chip-and-rotor configuration}.
For such a configuration, there is exactly one vertex at which one can perform a legal routing, namely, the vertex of the chip, and the legal routing again leads to a one chip-and-rotor configuration. Thus, in this case, the rotor-routing game is deterministic. We call this special case the \emph{classical rotor-routing process}.
The \emph{orbit} of a one chip-and-rotor configuration is defined as the set of configurations reachable from it by a legal game.

\section{A characterization of recurrect elements}
\label{sec::rec_char}

Recurrent elements play an important role in the rotor-router dynamics.

\begin{definition}
A divisor-and-rotor configuration $(x,\varrho)$ is \emph{recurrent}, if starting from $(x, \varrho)$, there exists a legal rotor-routing game that leads back to $(x, \varrho)$.
\end{definition}

For the classical rotor-routing process, Holroyd et al.~\cite{Holroyd08} gave a characterization for the recurrent configurations.
To state their result, we need a definition.

\begin{definition}[unicycle \cite{Holroyd08}]
A \emph{unicycle} is a one chip-and-rotor configuration 
 where the rotor subgraph
contains a unique directed cycle, and the chip lies on this cycle.
\end{definition}

\begin{thm}[{\cite[Theorem 3.8]{Holroyd08}}] \label{thm::rec_crc_jell}
If $G$ is strongly connected, then the recurrent one chip-and-rotor configurations
are exactly the unicycles.
\end{thm}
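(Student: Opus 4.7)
My plan is to prove the two directions of the equivalence separately; I begin with the direction that every recurrent one chip-and-rotor configuration is a unicycle. Suppose $(\mathbf{1}_u, \varrho)$ admits a legal game returning to itself, in which vertex $v$ is routed $n(v)$ times. The rotor at $v$ advances by $n(v)$ positions and must return to $\varrho(v)$, so $n(v) = k(v) \cdot d^+(v)$; then each of $v$'s $d^+(v)$ out-edges is used exactly $k(v)$ times, and the net chip change equals $L_G k$. Setting this to $\mathbf{0}_G$ forces $k = c \cdot \per_G$ for some $c \in \mathbb{Z}_{\ge 0}$ by the proposition, and any non-trivial game has $c \ge 1$, so every vertex is routed at least once. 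List the vertices $v_1, \ldots, v_n$ in order of last routing. The very last routing overall is $v_n$'s, and it places the chip at $u$, so the head of $\varrho(v_n)$ is $u$. For $i < n$, after $v_i$'s last routing only vertices in $\{v_{i+1}, \ldots, v_n\}$ are ever routed again; the chip position at that moment, which is the head of $\varrho(v_i)$, must be routed next (since this moment precedes the final routing), and so it lies in $\{v_{i+1}, \ldots, v_n\}$. Writing $u = v_k$ and iterating the rotor map from $u$ thus yields a sequence of strictly increasing indices that must terminate at $v_n$ and then close up to $v_k = u$: this is a directed cycle through $u$ in the rotor subgraph, and the same monotonicity forces every other vertex's rotor path to feed into it. Hence the rotor subgraph has a unique cycle containing $u$, so $(\mathbf{1}_u, \varrho)$ is a unicycle.

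For the converse direction, my plan is to show that a single legal routing maps unicycles to unicycles and that the induced map is a bijection on the finite set of unicycles; then every unicycle lies in a cycle of the dynamics and is thus recurrent. Routing a unicycle $(\mathbf{1}_u, \varrho)$ with cycle $C$ through $u$ replaces the edge $\varrho(u)$ (which lies on $C$) by the advanced rotor $\varrho(u)^+$ and moves the chip to the head $u'$ of $\varrho(u)^+$. A short case analysis on whether $u'$ lies on the broken arc of $C$ or in a tree attached to $C$ confirms that the new rotor subgraph has exactly one cycle, passing through $u'$. For surjectivity, given any unicycle $(\mathbf{1}_{u'}, \varrho')$ with cycle $C'$, the unique predecessor $u$ of $u'$ on $C'$, together with the rotor at $u$ retracted by one step in cyclic order, yields a unicycle mapping forward to $(\mathbf{1}_{u'}, \varrho')$.

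The main obstacle will be the case analysis in the converse direction: in each sub-case of how $u'$ relates to $C$ and its attached trees one must verify that the modified rotor subgraph is a single rho with the new chip on its cycle, and that the rotor-retraction inverts rotor-advancement in every sub-case. A subtler point in the first direction is ensuring that the cycle obtained by iterating the rotor map from $u$ actually passes through $u$ rather than closing up on itself somewhere else; the strict monotonicity of indices along every rotor path, together with the fact that the head of $\varrho(v_n)$ is $u$, is exactly what rules this out.
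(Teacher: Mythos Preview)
Your argument is correct. Note, however, that the paper does not prove Theorem~\ref{thm::rec_crc_jell} directly: it is quoted from Holroyd et al., and the paper's own contribution is the generalization Theorem~\ref{thm::rec_jellemzes}, from which the one-chip case falls out as Corollary~\ref{cor::rec_deg_one=unicycle}. So the relevant comparison is between your direct proof and the specialization of that general argument.

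For the ``recurrent $\Rightarrow$ unicycle'' direction, both proofs begin the same way (full rotor turns force $k\in\ker L_G$, hence every vertex is routed), but then diverge. The paper argues by contradiction on a hypothetical rotor cycle carrying no chip: the last-routed vertex on that cycle sends a chip along the cycle, forcing a later routing there. Your ordering $v_1,\ldots,v_n$ by time of last routing is a sharper version of the same idea --- instead of deriving a contradiction on each cycle, you show directly that the rotor map sends $v_i$ to some $v_j$ with $j>i$ (and $v_n$ to $u$), which immediately pins down the functional-graph structure as a single rho through $u$. For the ``unicycle $\Rightarrow$ recurrent'' direction the approaches are genuinely different: the paper explicitly constructs a legal game of length $\sum_v d^+(v)\per_G(v)$ via the ``finished vertex'' procedure (which is what makes the general-degree version go through), whereas you use the finite-bijection argument --- routing is a self-map of the finite set of unicycles, and the explicit preimage shows it is surjective, hence periodic. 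Your route is the original Holroyd et al.\ one and is shorter in the one-chip case; the paper's construction is what buys the generalization to arbitrary recurrent divisor-and-rotor configurations and the period formula of Proposition~\ref{prop::altalanos_perhossz}.
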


In the following theorem, we generalize this result to the general rotor-routing game.
One of the motivations for characterizing recurrent elements in the rotor-routing game is that for chip-firing, no characterization is known for recurrent divisors on general directed graphs. (The broadest case where a characterization is known is the case of Eulerian digraphs \cite{Perrot}.) Meanwhile, for rotor-routing, the following theorem gives a characterization for recurrent elements on general digraphs. We note that the proof of the ``only if'' direction essentially agrees with the proof of the ``only if'' direction from \cite[Theorem 3.8]{Holroyd08}. Let us call a strongly connected component in a digraph a \emph{sink component}, if there is no edge leaving the component.


\begin{thm} \label{thm::rec_jellemzes} 
For a digraph $G$, a divisor-and-rotor configuration $(x,\varrho)$ is recurrent if and 
only if $G$ has a sink component with vertex set $V_0$ such that $x(v)\geq 0$ for each $v\in V_0$, and on each directed cycle in the rotor subgraph restricted to $V_0$, $(\{\varrho(v): v\in V_0\})$,
there is at least one vertex $v$ with $x(v)>0$.
\end{thm}

\begin{proof}
First we show the ``only if'' direction.
Take a DRC $(x,\varrho)$ which is recurrent.
We claim that there exists a sink component such that $x\geq 0$ restricted to the component.
It is enough to show that in any nonempty legal game that 
transforms $(x,\varrho)$ back to itself, in some sink component, each vertex is routed at least once. Indeed, since we require legal routings, at the time a vertex is routed, it has positive number of chips, and it can never again become negative. 

Since the initial and final rotor configurations 
are the same, each vertex is routed either zero times, or its rotor 
makes at least one full turn. In the latter case, it passes at least one chip to 
each of its out-neighbors. Since the initial and final divisors are 
also the same, if a vertex receives a chip, it needs to be routed. Hence each vertex reachable on directed path from a routed 
vertex is also routed. 
For any vertex $v$, the set of vertices reachable on a directed path from $v$ contains the vertex set of a sink component. Hence there is indeed a sink component $V_0$ such that $x\geq 0$ on its vertices.

We claim that in $V_0$, there is at least one chip on each rotor cycle.
Take the nonempty legal game that transforms $(x,\varrho)$ back to itself. We have proved, that each vertex of $V_0$ is 
routed at least once. Suppose that there is a cycle $C$ in the rotor 
subgraph $\{\varrho(v),v\in V_0\}$, such that $x(v)=0$ for each $v\in V(C)$. 
Take the vertex $v\in V(C)$ that was last routed among the vertices of $C$. 
Since the final rotor configuration is $\varrho$, the last time $v$ was routed, the chip moved to the 
head of $\varrho(v)$. Let us call this vertex $w$.
Note that also $w\in V(C)$. Since originally 
$x(w)=0$, the divisor on $w$ is never negative during the process, 
therefore after routing $v$, $w$ has a positive number of chips. 
Since at the end $w$ has zero chips, $w$ needs to be routed after 
the last routing of $v$, which is a contradiction.

Now we show the ``if'' direction. It is enough to prove that if for a strongly connected digraph $G$, $x\geq 0$, and there is at least one chip on each rotor cycle, then $x$ is recurrent. Indeed, this shows that if $x$ satisfies the conditions of the theorem, then it is recurrent restricted to a sink component.
Moreover, playing a rotor-routing game on a sink component does not modify the divisor-and-rotor configuration outside the component, hence in this case $x$ is recurrent on the whole graph.

So now let our graph be strongly connected. It is enough to show that if for a DRC $(x,\varrho)$, $x\geq 0$, 
and there is exactly one chip on each rotor cycle, then $(x,\varrho)$ is recurrent.
Indeed, if a DRC $(x,\varrho)$ with $x\geq 0$ has at 
least one chip on each rotor cycle, then there is a divisor $x'$ 
with $x\geq x'\geq 0$ that has exactly one chip on each rotor-
cycle. A legal game from $(x',\varrho)$ is 
also a legal game from $(x,\varrho)$, and if starting from $(x',\varrho)$ it leads back to $(x',\varrho)$, then starting from $(x,\varrho)$ it leads back to $(x,\varrho)$.

So take a DRC $(x,\varrho)$ with $x\geq 0$ that has exactly one chip on each rotor cycle.
Give a name to each chip: $c_1,\dots , c_k$. Let their initial vertices be $v_1,\dots, v_k$, respectively.
In the rotor subgraph, each rotor cycle is in a different weakly connected component. Let the vertex set of the weakly connected component of $v_i$ be $V_i$. Then $V(G)=V_1\cup\dots\cup V_k$.
Moreover, $\{\varrho(v): v\in V_i-v_i\}$ is an in-arborescence rooted at $v_i$, that spans $V_i$. Let us call this arborescence $A_i$.

Let us do the following procedure:
For each vertex, remember how many times it has been routed (zero at the beginning). We call a vertex $v$ \emph{finished} at some time step, if it has been routed exactly $d^+(v)\cdot \per_G(v)$ times. Our procedure ensures that no vertex is routed more times than this. 
We start with routing the current vertex of $c_1$, until $c_1$ arrives at a finished vertex. We say that at this moment, $c_1$ gets finished. Then we start routing the vertex of $c_2$ until $c_2$ also arrives at a finished vertex, etc. until $c_k$ also arrives at a finished vertex.
Since we always route the vertex of a chip, we only make legal routings during this procedure.
Also, no vertex $v$ gets routed more than $d^+(v)\cdot \per_G(v)$ times, since whenever a chip arrives at a finished vertex, we stop routing it.

It is enough to show that during this procedure, each vertex $v$ is routed 
exactly $d^+(v)\cdot \per_G(v)$ times. From this, it follows immediately that at the end of the process, we arrive back to $(x,\varrho)$, as then each rotor makes some full turns, and each vertex $v$ forwards $d^+(v)\cdot\per_G(v)$ chips, and receives $\sum_{u\in \Gamma^-(v)}\per_G(u)$ chips. The two quantities are equal because $L_G\per_G=\mathbf{0}_G$.

We show by induction, that by the time $c_1,\dots, c_i$ are finished, all vertices in $V_1 \cup \dots \cup V_i$ are finished, and $c_j$ is in $v_j$ for $j=1,\dots, i$. For $i=k$, this proves that $(x,\varrho)$ is indeed recurrent.

For $i=0$, the condition is meaningless. Suppose that the condition holds for some $i-1$. We show that it also holds for $i$.

Since by induction hypothesis, $c_1,\dots, c_{i-1}$ all got finished in their initial positions, before we start routing $c_i$, all vertices forwarded and received the same number of chips.
Thus, while we are routing $c_i$, if at some moment $c_i$ is at a vertex $v\neq v_i$, then each vertex $u\notin \{v,v_i\}$ received and forwarded the same number of chips, $v$ received one more chips than forwarded, and $v_i$ forwarded one more chips than received. If $c_i$ is at $v_i$, then each vertex received and forwarded the same number of chips.

Suppose that the first finished vertex reached by $c_i$ is $v$. Then $v$ has been routed $d^+(v)\cdot \per_G(v)$ times. Since any in-neighbor $u$ of $v$ has been routed at most $d^+(u)\cdot \per_G(u)$ times, any such in-neighbor forwarded at most $\per_G(u)$ chips to $v$. Hence $v$ received at most 
$\sum_{u\in \Gamma^-(v)}\per_G(u)=d^+(v) \per_G(v)$ chips.
Thus when $c_i$ first reached $v$ as a finished vertex, $v$ received at most as many chips, as it forwarded. Hence $v = v_i$.

We show that each vertex in $V_i$ gets finished by the time $c_i$ gets finished. Since $A_i$ is an in-arborescence rooted at $v_i$ spanning $V_i$, it is enough to show, that when a vertex $v$ receives the chip for the $d^+(v) \per_G(v)$-th time, each of its in-neighbors in $A_i$ are already finished.

Suppose that $v$ has just received a chip for the $d^+(v) \per_G(v)$-th time. As it received at most $\sum_{u\in\Gamma^-(v)}\per_G(u)=\per_G(v) d^+(v)$ chips from its in-neighbors, to have equality, $v$ must have received $\per_G(u)$ chips from each in-neighbor $u$. But for those in-neighbors $u$, where $\overrightarrow{uv}\in A_i$, the chip is forwarded towards $v$ for the $d^+(u)$-th, $2d^+(u)$-th, $\dots$ times, so from these vertices, a chip must have been forwarded $\per_G(u) d^+(u)$ times, hence they are indeed finished.
\end{proof}

\begin{coroll} \label{cor::rec_deg_one=unicycle}
On strongly connected digraphs, the recurrent configurations where the degree of the divisor is one are exactly the unicycles.
\end{coroll}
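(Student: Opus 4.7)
The plan is to deduce the corollary directly from Theorem~\ref{thm::rec_jellemzes} in both directions, using only the elementary observation that every rotor subgraph (being a functional digraph on $V(G)$ with out-degree one at every vertex) contains at least one directed cycle.

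For the ``only if'' direction, I would start with a recurrent configuration $(x,\varrho)$ with $\deg(x)=1$. Theorem~\ref{thm::rec_jellemzes} gives $x\geq 0$, so the degree condition forces $x=\mathbf{1}_v$ for a unique vertex $v$; in particular $(x,\varrho)$ is a one chip-and-rotor configuration. The same theorem requires every directed cycle in the rotor subgraph to contain a vertex $u$ with $x(u)>0$. Since $v$ is the only such vertex, every rotor cycle must pass through $v$. But distinct cycles in the rotor subgraph are vertex-disjoint (each vertex has out-degree one there), so there is at most one rotor cycle, and at least one exists by the functional-digraph remark above. Hence the rotor subgraph has a unique directed cycle, and $v$ lies on it: this is precisely a unicycle.

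For the ``if'' direction I would just read off the hypotheses of Theorem~\ref{thm::rec_jellemzes} from the definition of a unicycle. A unicycle $(x,\varrho)$ is a one chip-and-rotor configuration, so $x=\mathbf{1}_v\geq 0$ for the vertex $v$ carrying the chip; moreover $v$ lies on the unique rotor cycle $C$, and there are no other rotor cycles, so every rotor cycle contains a vertex with $x$-value $>0$. Theorem~\ref{thm::rec_jellemzes} then gives recurrence.

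There is essentially no obstacle here; the whole content is packaged in Theorem~\ref{thm::rec_jellemzes}, and the only ingredient not literally in its statement is the standard fact that a rotor subgraph always contains a directed cycle, which I would mention in one sentence rather than prove.
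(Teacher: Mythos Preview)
Your proposal is correct and matches the paper's intent: the corollary is stated without proof immediately after Theorem~\ref{thm::rec_jellemzes}, as an immediate consequence, and your argument spells out precisely the obvious deduction (nonnegativity plus degree one forces $x=\mathbf{1}_v$, and the functional-digraph structure of the rotor subgraph forces a unique cycle through $v$). There is nothing to add.
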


From the proof of Theorem \ref{thm::rec_jellemzes}, we can easily deduce a formula for the possible lengths of legal games transforming a recurrent DRC back to itself:

\begin{prop} \label{prop::altalanos_perhossz}
For a strongly connected digraph $G$, if a DRC $(x,\varrho)$ is recurrent, then for any nonempty legal game that transforms it back to itself, there is an integer $k\in \mathbb{N}$ such that each vertex $v$ is routed $k\cdot d^+(v)\cdot\per_G(v)$ times. Moreover, there exists a legal game with $k=1$. 
\end{prop}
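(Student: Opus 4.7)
The plan is to exploit two conservation laws that must hold along any nonempty legal game returning $(x,\varrho)$ to itself: the rotor configuration is restored, and the divisor is restored. I would extract from these a vector identity living in the kernel of $L_G$, and then invoke the uniqueness of $\per_G$.

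First I would fix such a legal game, and for each $v\in V(G)$ let $r_v$ be the total number of times $v$ is routed. Because the rotor at $v$ cycles deterministically through its $d^+(v)$ out-edges in the fixed cyclic order and returns to $\varrho(v)$ at the end, $r_v$ must be a multiple of $d^+(v)$; write $r_v=n_v\cdot d^+(v)$ with $n_v\in\mathbb{Z}_{\geq 0}$ equal to the number of full turns of $v$'s rotor. Over $n_u$ full turns at $u$, exactly $n_u\cdot d(u,v)$ chips are forwarded from $u$ to each out-neighbor $v$. Since the final divisor agrees with the initial one, the chip count at every $v$ is conserved:
\[
d^+(v)\cdot n_v \;=\; \sum_{u\in\Gamma^-(v)} d(u,v)\cdot n_u \qquad \text{for every } v\in V(G),
\]
which is exactly $L_G n=\mathbf{0}_G$ for $n=(n_v)_{v\in V(G)}$.

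Next I would apply the characterization of $\per_G$ stated in Section~\ref{sec::intro}: since $L_G$ has rank $|V(G)|-1$ by strong connectivity and $\per_G$ is a positive integer kernel vector with relatively prime entries, every integer vector in $\ker L_G$ must be an integer multiple of $\per_G$. Hence $n=k\cdot\per_G$ for some $k\in\mathbb{Z}$; nonnegativity of $n$ together with strict positivity of $\per_G$ forces $k\geq 0$; and since the game is nonempty, at least one vertex is routed, so the strong-connectivity argument reproduced in the ``only if'' direction of Theorem~\ref{thm::rec_jellemzes} (a routed vertex passes a chip to each out-neighbor after a full turn, and any received chip must itself be routed away) shows that every vertex is routed, giving $k\geq 1$ and therefore $r_v = k\cdot d^+(v)\cdot\per_G(v)$.

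For the moreover part, I would simply invoke the explicit procedure built in the ``if'' direction of the proof of Theorem~\ref{thm::rec_jellemzes}: that procedure was shown to produce a legal game returning $(x,\varrho)$ to itself in which each vertex $v$ is routed exactly $d^+(v)\cdot\per_G(v)$ times, which is the $k=1$ case. The only point requiring care is the step $r_v\equiv 0\pmod{d^+(v)}$, which follows from the rotor at $v$ being driven by a fixed cyclic ordering of period $d^+(v)$, so the rotor's final position depends only on $r_v\bmod d^+(v)$; the rest of the argument is a routine application of the structure of $\ker L_G$.
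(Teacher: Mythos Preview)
Your proof is correct and follows essentially the same line as the paper's: from the rotors returning you get $r_v=d^+(v)\cdot n_v$, from the divisor returning you get $L_G n=\mathbf{0}_G$, the one-dimensional kernel then forces $n=k\cdot\per_G$, and the construction in the proof of Theorem~\ref{thm::rec_jellemzes} supplies the $k=1$ game. If anything you are slightly more careful than the paper, since you keep the edge multiplicities $d(u,v)$ in the conservation identity and explicitly argue $k\geq 1$.
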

\begin{proof}
If a legal game transforms a DRC $(x,\varrho)$ back to itself, then each rotor makes some full turns. Thus for each $v\in V(G)$, there exists some $z(v)\in \mathbb{N}$ such that $v$ has been routed $d^+(v)\cdot z(v)$ times.
Since the initial and final divisors are also the same, each vertex gave and received the same number of chips. If a vertex $u$ was routed $z(u)\cdot d^+(u)$ times, a vertex $v\in \Gamma^+(u)$ received $z(u)$ chips from it. Thus for each vertex $v$,
$$
\sum_{u\in\Gamma^-(v)}z(u)=z(v)\cdot d^+(v).
$$
Hence the vector $z$ is an eigenvector of the Laplacian matrix with eigenvalue zero. Since $L_G$ has a one-dimensional kernel,  $z$ is a multiple of $\per_G$.

The construction in the proof of Theorem \ref{thm::rec_jellemzes} shows that for any recurrent DRC, there exists a legal game with $k=1$ that transforms it back to itself.
\end{proof}

For a unicycle, the rotor-routing game is deterministic, hence we obtain that it takes $\sum_{v\in V(G)}\per_G(v) d^+(v)$ steps for the rotor-router process to return to the initial configuration.
This gives the following theorem, originally proved by Pham \cite{pham_rotor} using linear algebra.

\begin{thm} \label{thm::perhossz}
    For a strongly connected digraph $G$, the size of the orbit of any unicycle is $\sum_{v\in V(G)}\per_G(v) d^+(v)$.
\end{thm}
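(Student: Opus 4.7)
The plan is to combine Corollary \ref{cor::rec_deg_one=unicycle} with Proposition \ref{prop::altalanos_perhossz}, exploiting the crucial fact that the rotor-routing process is deterministic on a one chip-and-rotor configuration.

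First I would note that for a one chip-and-rotor configuration, at each time step there is exactly one vertex that admits a legal routing (the vertex currently holding the chip), so the sequence of legal routings starting from a unicycle $(x,\varrho)$ is uniquely determined. Consequently the orbit of $(x,\varrho)$ is a cyclic sequence of DRCs, and its size is exactly the period: the minimum number of legal routings needed to return to $(x,\varrho)$.

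Next I would invoke Corollary \ref{cor::rec_deg_one=unicycle}, which tells us that unicycles are recurrent, so such a return actually occurs. Then by Proposition \ref{prop::altalanos_perhossz}, any nonempty legal game transforming $(x,\varrho)$ back to itself routes each vertex $v$ exactly $k \cdot d^+(v)\cdot \per_G(v)$ times for some $k \in \mathbb{N}$, and there is at least one such legal game realizing $k=1$. In particular, the total number of routings in a return game is $k\cdot \sum_{v\in V(G)} \per_G(v)\, d^+(v)$, minimized at $k=1$.

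Finally, since the process is deterministic, the period equals the minimum length of a nonempty legal game returning to $(x,\varrho)$, so the orbit has size $\sum_{v\in V(G)} \per_G(v)\, d^+(v)$. I do not anticipate a real obstacle here; the content is already packaged in Proposition \ref{prop::altalanos_perhossz}, and the only point worth stating explicitly is that determinism of the classical rotor-routing process turns ``there is a return game of length $\sum_v \per_G(v)\, d^+(v)$'' into ``the orbit has exactly that size.''
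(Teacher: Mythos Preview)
Your proposal is correct and matches the paper's own argument essentially verbatim: the paper derives Theorem \ref{thm::perhossz} directly from Proposition \ref{prop::altalanos_perhossz} together with the observation that the classical rotor-routing process on a unicycle is deterministic, exactly as you outline. There is nothing to add.
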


\section{Linear equivalence}
\label{sec::lin_eqv}

For the chip-firing game, linear equivalence is a linear-algebraic type, computationally well-behaved concept, that proves very useful for analyzing reachability questions. In this section, we generalize the concept of linear equivalence to the rotor-routing game. Then we apply it to analyzing reachability questions in the rotor-routing game, and to give a new interpretation of the rotor-routing action of the Picard group on the set of spanning in-arborescences.
Using the connection between linear equivalence for chip-firing and for rotor-routing, we give a bijective proof that the number of rotor-router unicycle-orbits equals the order of the Picard group.

In chip-firing, for strongly connected digraphs, linear equivalence is the same as reachability where we let non-legal firings to happen. For rotor-routing, we use the analogue of this characterization as definition.
Let us call a non-necessarily legal routing an \emph{unconstrained routing}.

\begin{definition}[linear equivalence of divisor-and-rotor configurations]
We define two configurations $(x_1,\varrho_1)$ and $(x_2,\varrho_2)$ to be linearly equivalent, if $(x_2,\varrho_2)$ can be reached from $(x_1,\varrho_1)$ by a sequence of unconstrained routings. We denote this by $(x_1,\varrho_1)\sim (x_2,\varrho_2)$.
\end{definition}

\begin{remark} \label{rem::elozmeny_Levine}
The idea of analyzing the interplay between legal and non-legal games has appeared previously in some papers. See for example \cite{height-arrow04,Kager-Levine,Holroyd08}.
\end{remark}

\begin{remark} \label{rem::routing_tulaljdonsagok}
Suppose we have an initial configuration, and a multiset of vertices to perform unconstrained routings at. Then the resulting configuration is independent of the order in which we perform the routings.
Hence we can encode a sequence of unconstrained routings in a vector $r\in \mathbb{N}^{V(G)}$ such that $r(v)$ is the number of times vertex $v$ has been routed. We call such a vector a \emph{routing vector}.

Similarly, for chip-firing, by firing a vector $z\in\mathbb{N}^{V(G)}$, we mean firing each vertex $v$ $z(v)$ times. This has the effect of adding $L_Gz$ to the divisor, independent of the order in which we perform the (not necessarily legal) firings.

Note that if a routing vector $r$ is of the form $r=(d^+(v_1)\cdot z(v_1), \dots, d^+(v_n)\cdot z(v_n))$ for some $z\in \mathbb{N}^{V(G)}$, then routing $r$ from a DRC $(x,\varrho)$ leads to a DRC $(x',\varrho)$, where $x'$ is the divisor we get after firing the vector $z$ from $x$.
\end{remark}

\begin{prop}
On strongly connected digraphs, linear equivalence of divisor-and-rotor configurations is an equivalence-relation.
\end{prop}
\begin{proof}
Reflexivity and transitivity are obvious. Let us prove symmetry.
It is enough to prove that if we get $(x_2,\varrho_2)$ from $(x_1,\varrho_1)$ by one unconstrained routing at a vertex $w$, then $(x_1,\varrho_1)$ can also be reached from $(x_2,\varrho_2)$ by unconstrained routings. Take the following routing vector $r$:
$$
r(v) = \left\{\begin{array}{cl} d^+(v)\per_G(v) & \text{if $v\neq w$},  \\
         d^+(w)\per_G(w)-1 & \text{if $v=w$}.
      \end{array} \right.
$$
This is nonnegative, as $\per_G$ has strictly positive coordinates for strongly connected digraphs.
Routing $r$ from $(x_2,\varrho_2)$ is equivalent to routing $(d^+(v_1)\cdot \per_G(v_1), \dots, d^+(v_n)\cdot \per_G(v_n))$ from $(x_1,\varrho_1)$, that by Remark \ref{rem::routing_tulaljdonsagok} leads to $(x_1,\varrho_1)$.
\end{proof}

The following lemma shows the connection between the linear equivalence of divisor-and-rotor configurations, and the linear equivalence of graph divisors.

\begin{lemma} \label{lem::div_ekv_jell}
Let $G$ be a strongly connected digraph. If $\varrho$ is a rotor configuration, and $x_1, x_2 \in \Div(G)$, then $x_1\sim x_2$ if and only if $(x_1,\varrho)\sim(x_2,\varrho)$.
\end{lemma}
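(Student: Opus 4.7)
The plan is to prove both directions directly using Remark \ref{rem::routing_tulaljdonsagok}, which tells us that when a routing vector $r$ has the special form $r(v)=d^+(v)\cdot z(v)$, routing $r$ from $(x,\varrho)$ yields $(x+L_Gz,\varrho)$, i.e., the rotor is unchanged and the divisor changes as if we had fired $z$.

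For the ``only if'' direction, I would start from $x_1\sim x_2$, which by definition means $x_2=x_1+L_Gz$ for some $z\in\mathbb{Z}^{V(G)}$. As recalled in the chip-firing subsection, by adding a suitable multiple of $\per_G$ to $z$ we may assume $z\geq 0$. Now I would define the routing vector $r(v):=d^+(v)\cdot z(v)\geq 0$ and apply Remark \ref{rem::routing_tulaljdonsagok}: performing $r$ unconstrained routings starting from $(x_1,\varrho)$ sends each rotor through $z(v)$ complete turns (so the rotor configuration is preserved) and modifies the divisor by $L_Gz$, producing $(x_2,\varrho)$. Hence $(x_1,\varrho)\sim(x_2,\varrho)$.

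For the ``if'' direction, suppose $(x_1,\varrho)\sim(x_2,\varrho)$ and encode the underlying sequence of unconstrained routings by a vector $r\in\mathbb{N}^{V(G)}$ as in Remark \ref{rem::routing_tulaljdonsagok}. The key observation is that, because the initial and final rotor configurations both equal $\varrho$, the rotor at each vertex $v$ has advanced a whole number of full cycles, so $d^+(v)$ divides $r(v)$. Writing $r(v)=d^+(v)\cdot z(v)$ with $z\in\mathbb{N}^{V(G)}$ and invoking Remark \ref{rem::routing_tulaljdonsagok} again, the effect on the divisor is $x_2=x_1+L_Gz$, so by definition $x_1\sim x_2$.

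I do not anticipate a real obstacle here: once one invokes the fact that an integer number of full rotor turns at each vertex is equivalent (at the divisor level) to a chip-firing step, both implications reduce to bookkeeping. The only mild care needed is to justify assuming $z\geq 0$ in the forward direction, which is handled as above by adjusting by a multiple of $\per_G$.
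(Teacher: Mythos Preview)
Your proposal is correct and follows essentially the same approach as the paper's own proof: both directions use Remark~\ref{rem::routing_tulaljdonsagok} in exactly the way you describe, including the adjustment of $z$ by a multiple of $\per_G$ in the forward direction and the observation that the coincidence of initial and final rotor configurations forces $d^+(v)\mid r(v)$ in the backward direction.
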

\begin{proof}
 Suppose that $x_1\sim x_2$. This means that there exists $z\in \mathbb{Z}^{V(G)}$ such that $x_2=x_1 + L_Gz$. Moreover, we can suppose that $z$ is nonnegative, otherwise we can add $\per_G$ to it sufficiently many times.
 From initial configuration $(x_1,\varrho_1)$, route vertices according to the following routing vector: $(d^+(v_1)\cdot z(v_1), \dots, d^+(v_n)\cdot z(v_n))$. Then the resulting chip-moves are exactly the same as in chip-firing after firing the vector $z$, thus we arrive at the divisor $x_2$. On the other hand, each rotor made some full turns, hence the final rotor configuration is again $\varrho$.
 
 Now suppose that $(x_1,\varrho)\sim (x_2,\varrho)$. Fix a routing vector $r$ witnessing the equivalence of $(x_1,\varrho)$ and $(x_2,\varrho)$. Then since the initial and the final rotor configurations are both $\varrho$, each rotor made some full turns, hence $r$ must be of the form $r=(d^+(v_1)\cdot z(v_1), \dots, d^+(v_n)\cdot z(v_n))$ for some $z\in \mathbb{Z}^{V(G)}$. Then firing $z$ induces the same chip-moves as routing $r$, hence $x_2=x_1+L_Gz$, thus $x_1\sim x_2$.
\end{proof}

Maybe the nicest property of the linear equivalence is that it is computationally well-behaved:

\begin{prop} \label{prop::lin_eqv_decidable}
For given divisor-and-rotor configurations $(x_1,\varrho_1)$ and $(x_2,\varrho_2)$, deciding whether $(x_1,\varrho_1)\sim(x_2,\varrho_2)$ holds can be done in polynomial time.
\end{prop}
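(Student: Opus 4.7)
The plan is to reduce linear equivalence of DRCs to the classical question of whether a given integer vector lies in the image lattice of $L_G$, which is decidable in polynomial time by Smith or Hermite normal form.

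First, for each vertex $v$ I would compute the unique $k_v \in \{0, 1, \dots, d^+(v)-1\}$ such that advancing the rotor $k_v$ steps along the cyclic order at $v$ takes $\varrho_1(v)$ to $\varrho_2(v)$; this is read off the input in polynomial time. Since $d^+(v)$ consecutive routings at $v$ return its rotor to the starting position, any routing vector $r$ witnessing the equivalence must satisfy $r(v) \equiv k_v \pmod{d^+(v)}$, and conversely this congruence characterises exactly those $r$ for which the final rotor configuration is $\varrho_2$. Writing $r(v) = k_v + d^+(v)\cdot z(v)$ with $z(v) \in \mathbb{Z}_{\geq 0}$ converts the search for $r$ into a search for $z$.

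Next, I would compute, for each $v$, the divisor shift $\delta_v \in \mathbb{Z}^{V(G)}$ produced by $k_v$ consecutive routings at $v$ starting from rotor state $\varrho_1(v)$; this is directly readable from $\varrho_1$ and the cyclic orders in polynomial time. By Remark \ref{rem::routing_tulaljdonsagok}, the remaining $d^+(v)\cdot z(v)$ routings at $v$ complete $z(v)$ full turns and contribute exactly $z(v)\cdot L_G\mathbf{1}_v$ to the divisor, independently of interleavings with routings at other vertices. Summing over all vertices, routing $r$ from $(x_1,\varrho_1)$ yields $(x_1 + \sum_v \delta_v + L_G z,\,\varrho_2)$. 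Hence $(x_1,\varrho_1) \sim (x_2,\varrho_2)$ if and only if the explicitly computable vector $y := x_2 - x_1 - \sum_v \delta_v$ satisfies $L_G z = y$ for some $z \in \mathbb{Z}^{V(G)}_{\geq 0}$.

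Finally, the nonnegativity constraint on $z$ is harmless: since $\per_G$ is strictly positive and $L_G\per_G = \mathbf{0}_G$, any integer solution $z$ can be shifted by a sufficiently large multiple of $\per_G$ to become nonnegative without changing $L_G z$. So the question collapses to testing whether $y$ lies in the integer lattice $L_G(\mathbb{Z}^{V(G)})$, which is a standard polynomial-time lattice-membership problem solved, for example, by computing the Smith or Hermite normal form of $L_G$ via the Kannan--Bachem algorithm. The main obstacle is really only bookkeeping: correctly splitting each $r(v)$ into its partial-turn and full-turn contributions, and invoking Remark \ref{rem::routing_tulaljdonsagok} to guarantee that the partial-turn shifts $\delta_v$ are independent of ordering. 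The genuinely nontrivial ingredient is the polynomial-time algorithm for integer lattice membership, which is a well-known tool.
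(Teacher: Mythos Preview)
Your proof is correct and essentially identical to the paper's: your $(k_v)_v$ is the paper's routing vector $\alpha$, and your $x_1 + \sum_v \delta_v$ is the paper's intermediate divisor $y$, after which both arguments reduce to testing $x_2 - y \in L_G(\mathbb{Z}^{V(G)})$ in polynomial time. The only cosmetic difference is that the paper packages the ``full-turn'' part of your decomposition as an appeal to Lemma~\ref{lem::div_ekv_jell} and says ``Gaussian elimination'' where you say ``Smith/Hermite normal form.''
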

\begin{proof}
For each vertex $v$, let $\alpha(v)$ be the number of out-edges from $v$ such that $\varrho_1(v)<e\leq \varrho_2(v)$ in the cyclic order at $v$. If we route the routing vector $\alpha$ from $(x_1,\varrho_1)$, we arrive at a configuration $(y,\varrho_2)$, where $y$ is some divisor. This means at most $|E(G)|$ routings.
For digraphs with multiple edges, $|E(G)|$ is not necessarily polynomial in the size of the input, but note that for each pair of vertices $u,v\in V(G)$, we can compute how many chips need to pass through the multi-edge $\overrightarrow{uv}$, and we can do this in time linear in the size of the description of the cyclic order at $u$. Hence we can compute the divisor $y$ in polynomial time.

As $(y,\varrho_2)\sim (x_1,\varrho_1)$, we have $(x_1,\varrho_1)\sim(x_2,\varrho_2)$ if and only if $(y,\varrho_2)\sim(x_2,\varrho_2)$, which by Lemma \ref{lem::div_ekv_jell} is equivalent to $y\sim x_2$. This can be checked in polynomial time using Gaussian elimination, then solving a system of linear congruence equations (see also \cite[Proposition 8]{chip-reach}).
\end{proof}

\subsection{Reachability questions}

\begin{notation}
Let us denote by $(x_1, \varrho_1)\leadsto(x_2, \varrho_2)$ if $(x_2, \varrho_2)$ can be reached from $(x_1, \varrho_1)$ by a legal rotor-routing game.
\end{notation}

In this section, we examine the reachability problem for rotor-routing from a computational aspect. As Theorem \ref{thm::perhossz} shows, in the classical rotor-routing process, unicycle-orbits can have exponential size. Hence there exist configurations such that one is only reachable from the other by exponentially many routings. This shows that the question of deciding whether one divisor-and-rotor configuration can be reached from another one by a legal rotor-routing game is nontrivial.
However, as the following proposition shows, if the target configuration is recurrent, the reachability problem is decidable in polynomial time. This result is an analogue of a recent result for the chip-firing game \cite{chip-reach}. The proof is also a complete analogue.

\begin{prop} \label{prop::diffuz_DRC_elerheto}
Let $(x_1,\varrho_1)$ and $(x_2,\varrho_2)$ be two divisor-and-rotor configurations on a strongly connected digraph.
If $(x_2,\varrho_2)$ is recurrent, then
$(x_1, \varrho_1)\leadsto(x_2, \varrho_2)$
if and only if $(x_1,\varrho_1)\sim (x_2,\varrho_2)$ .
\end{prop}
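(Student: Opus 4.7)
The forward direction is immediate: any legal game is in particular a sequence of unconstrained routings, so $(x_1,\varrho_1)\leadsto(x_2,\varrho_2)$ directly yields $(x_1,\varrho_1)\sim(x_2,\varrho_2)$. For the reverse, assume $(x_1,\varrho_1)\sim(x_2,\varrho_2)$ with $(x_2,\varrho_2)$ recurrent, and fix a nonnegative routing vector $r$ witnessing the equivalence. By Remark~\ref{rem::routing_tulaljdonsagok}, we may freely enlarge $r$ by adding multiples of $p:=(d^+(v)\per_G(v))_{v\in V(G)}$, because routing $p$ amounts to firing $\per_G\in\ker L_G$ and changes neither the divisor nor the rotor. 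Meanwhile, Proposition~\ref{prop::altalanos_perhossz} applied to $(x_2,\varrho_2)$ furnishes a legal game from $(x_2,\varrho_2)$ back to itself with routing vector exactly $p$. Consequently, for every integer $k\geq 0$, the vector $b:=r+kp$ is still an unconstrained routing vector from $(x_1,\varrho_1)$ to $(x_2,\varrho_2)$.

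My plan is to show that for $k$ sufficiently large, $b$ admits a \emph{legal} realization, which is then the desired legal game. The tool is the natural greedy procedure from $(x_1,\varrho_1)$: keep a counter at each vertex, and while possible route some vertex $v$ whose counter is still below $b(v)$ and which currently carries at least one chip. Termination in finite time is automatic, and if the procedure exhausts every counter we are done. So the task reduces to ruling out premature termination: suppose the greedy halts while a nonempty set $S\subseteq V(G)$ of vertices is still unfinished. Then the terminal divisor $y$ satisfies $y(v)\leq 0$ on $S$, while every $u\in V(G)\setminus S$ has been routed at least $k\per_G(u)d^+(u)$ times, and has therefore pushed at least $k\per_G(u)d(u,v)$ chips along each out-edge $\overrightarrow{uv}$; the chip inflow into $S$ thus grows linearly in $k$.

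To derive a contradiction I would combine a chip-balance on $S$ with the Kirchhoff-style identity
\[
\sum_{u\in V(G)\setminus S}\per_G(u)d^+_S(u)=\sum_{v\in S}\per_G(v)d^+_{V(G)\setminus S}(v),
\]
a direct consequence of $L_G\per_G=\mathbf{0}_G$. If $k$ is chosen large enough (and $r$ enlarged so that $r(v)\geq x_1(v)$ beforehand), then the inflow into $S$ cannot be absorbed unless $S$ has no out-edges into $V(G)\setminus S$; by strong connectivity this forces $S=V(G)$, which is in turn excluded by $\deg y=\deg x_1=\deg x_2\geq 1$, the last inequality following from Theorem~\ref{thm::rec_jellemzes} because the rotor subgraph contains a directed cycle that in a recurrent configuration must carry at least one chip. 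The main obstacle is precisely this final chip-counting step: after substituting the Kirchhoff identity, the leading $k$-terms of the most naive chip-balance cancel exactly, so one has to choose $r$ and $k$ in a coordinated way to create enough slack for the contradiction to bite. The overall argument should be a near-verbatim rotor-routing translation of the chip-firing reachability proof of \cite{chip-reach}.
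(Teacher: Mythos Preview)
Your forward direction is fine, and your overall framework for the converse---run a greedy legal game with a prescribed budget $b$ and derive a contradiction if it halts prematurely on some nonempty set $S$---is a reasonable strategy, genuinely different from the paper's proof. The paper argues instead by induction on $|r|$, where $r$ is a routing vector taking $(x_2,\varrho_2)$ to $(x_1,\varrho_1)$: it fixes a legal period $v_1,\dots,v_m$ of $(x_2,\varrho_2)$ in which every vertex occurs, locates the first $v_i$ with $r(v_i)>0$, and shows that the prefix $v_1,\dots,v_{i-1}$ is legal from $(x_1,\varrho_1)$ as well, which lets one peel off one unit of $r$ and recurse.

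The gap in your argument is exactly where you flag it. After the Kirchhoff identity the order-$k$ contributions to the inflow and outflow of $S$ cancel \emph{exactly}, so enlarging $k$ buys nothing: the chip balance on $S$ for the full routing $c$ carries no information beyond what was already there for $k=0$. The sentence ``one has to choose $r$ and $k$ in a coordinated way to create enough slack'' is not a proof, and in fact no such choice rescues this line, because the counters $c(v)$ on $S$ are uncontrolled and can absorb any slack you insert. Your intended conclusion, that $S$ has no out-edges into $V(G)\setminus S$, is also not the right target.

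The greedy scheme can be completed, but the contradiction has to come from the \emph{residual} routing $\delta:=b-c$ (supported on $S$) rather than from $c$ and a Kirchhoff cancellation. Routing $\delta$ from the halted configuration $(y,\sigma)$ lands at $(x_2,\varrho_2)$; a chip balance over $S$ together with $x_2\geq 0$ (Theorem~\ref{thm::rec_jellemzes}) and $y|_S\leq 0$ forces $x_2|_S=0$ and shows that no chip crosses from $S$ to $V(G)\setminus S$ during the $\delta$-routing. Since $\delta(v)\geq 1$ on $S$, the final rotor $\varrho_2(v)$ at every $v\in S$ then points back into $S$, so $\{\varrho_2(v):v\in S\}$ contains a directed cycle on which $x_2$ vanishes---contradicting the recurrence characterization in Theorem~\ref{thm::rec_jellemzes}. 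Note that this finish uses the full strength of Theorem~\ref{thm::rec_jellemzes}, not merely $\deg x_2\geq 1$, and it does not need the parameter $k$ at all.
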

\begin{proof}
  The ``only if'' direction is obvious, since a sequence of legal routings is also a sequence of unconstrained routings.

  Let us prove the ``if'' direction. By our assumption, $(x_2, \varrho_2)$ is recurrent. Let $v_1, v_2, \dots , v_m$ be a sequence of vertices such that routing them in this order is a legal rotor-routing game that transforms $(x_2, \varrho_2)$ back to itself. By Proposition \ref{prop::altalanos_perhossz}, we can suppose that in this sequence each vertex $v$ occurs $d^+(v)\per_G(v)$ times. As $\per_G$ is strictly positive, this means that each vertex occurs at least once.

  By our assumption that $(x_1,\varrho_1)\sim (x_2,\varrho_2)$, there exists a routing vector $r\in \mathbb{N}^{V(G)}$ such that routing $r$ transforms $(x_2,\varrho_2)$ to $(x_1,\varrho_1)$.
  
  We proceed by induction on $|r|=\sum_{v\in V}r(v)$. If $|r|=0$, then $(x_1,\varrho_1)= (x_2,\varrho_2)$ hence we have nothing to prove.
  Otherwise let $i$ be the smallest index such that $r(v_i)>0$. Such an index exists since each vertex occurs in the sequence $v_1,\dots ,v_m$.
  From $(x_2,\varrho_2)$ route at vertices $v_1, \dots, v_{i-1}$.  These are all legal routings by definition. Let the resulting DRC be $(x'_2, \varrho'_2)$. 
  
  We claim that routing $v_1, \dots, v_{i-1}$ from $(x_1,\varrho_1)$ is also a legal game. Indeed, as $r(v_1)=\dots=r(v_{i-1})=0$, we can get $(x_1,\varrho_1)$ from $(x_2,\varrho_2)$ such that we do not route at $v_1, \dots, v_{i-1}$. Hence $x_1(v_j)\geq x_2(v_j)$ for $j=1,\dots, i-1$. Also for the same reason, $\varrho_1(v_j)=\varrho_2(v_j)$ for $j=1,\dots, i-1$. Hence for the two initial configurations, while routing $v_1,\dots v_j$ for some $1\leq j\leq i-1$, the chip-moves, and the rotor-moves are the same in both games. Therefore at any time step, the rotors at $v_1,\dots v_{i-1}$ are the same in the two games, and the number of chips is greater or equal on these vertices in the game with initial configuration $(x_1,\varrho_1)$. This shows that routing $v_1, \dots, v_{i-1}$ from $(x_1,\varrho_1)$ is indeed a legal game.
Let the resulting DRC be $(x'_1, \varrho'_1)$. Then $(x_1,\varrho_1)\leadsto (x'_1, \varrho'_1)$.
  
  From initial configuration $(x_2,\varrho_2)$, routing $r$ then routing $v_1, \dots, v_{i-1}$ has the same effect as routing $v_1, \dots, v_{i-1}$ then routing $r$. Routing $r$ transforms $(x_2,\varrho_2)$ to $(x_1,\varrho_1)$, then routing $v_1, \dots, v_{i-1}$ transforms that to $(x'_1,\varrho'_1)$. Routing $v_1, \dots, v_{i-1}$ from $(x_2,\varrho_2)$ transforms it to $(x'_2, \varrho'_2)$. We conclude that routing $r$ from $(x'_2, \varrho'_2)$ results in $(x'_1, \varrho'_1)$. 
  
  From $(x'_2, \varrho'_2)$, route $v_i$ (this is also a legal routing). Let the resulting DRC be $(x''_2, \varrho''_2)$. Then for 
  $$
  r'(v)= \left\{\begin{array}{cl} r(v) & \text{if }v\neq v_i,  \\
         r(v_i)-1 & \text{if } v=v_i,
      \end{array} \right.
$$
we have that routing $r'$ from $(x''_2, \varrho''_2)$ results in $(x'_1, \varrho'_1)$, moreover, $|r'|=|r|-1$.

We claim that $(x''_2, \varrho''_2)$ is also a recurrent DRC. Indeed, routing vertices $v_{i+1}, \dots , v_m, v_1, \dots , v_i$ is a legal game that transforms $(x''_2, \varrho''_2)$ to itself.
 Hence by induction hypothesis, $(x'_1, \varrho'_1)\leadsto(x''_2, \varrho''_2)$. As $(x_1, \varrho_1)\leadsto (x'_1, \varrho'_1)$ 
and $(x''_2, \varrho''_2)\leadsto(x_2, \varrho_2)$, we have $(x_1, \varrho_1)\leadsto(x_2, \varrho_2)$.
\end{proof}

\begin{coroll} \label{cor::unicyc_orbit_jell}
Two unicycles $(\mathbf{1}_{v_1},\varrho_1)$ and $(\mathbf{1}_{v_2},\varrho_2)$ lie in the same rotor-router orbit if and only if $(\mathbf{1}_{v_1},\varrho_1)\sim(\mathbf{1}_{v_2},\varrho_2)$.
\end{coroll}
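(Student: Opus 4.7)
The plan is to derive this almost immediately from Proposition \ref{prop::diffuz_DRC_elerheto}, once we observe that unicycles are recurrent configurations.

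For the ``only if'' direction, I would argue that if two unicycles lie in the same rotor-router orbit, then by definition one is reachable from the other via a legal game, i.e.~$(\mathbf{1}_{v_1},\varrho_1)\leadsto(\mathbf{1}_{v_2},\varrho_2)$. Since every legal routing is in particular an unconstrained routing, the sequence of routings witnessing reachability also witnesses linear equivalence, giving $(\mathbf{1}_{v_1},\varrho_1)\sim(\mathbf{1}_{v_2},\varrho_2)$.

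For the ``if'' direction, I would invoke Theorem \ref{thm::rec_crc_jell} (or equivalently Corollary \ref{cor::rec_deg_one=unicycle}) to conclude that the unicycle $(\mathbf{1}_{v_2},\varrho_2)$ is recurrent. Then Proposition \ref{prop::diffuz_DRC_elerheto} applied to the pair $(\mathbf{1}_{v_1},\varrho_1)$, $(\mathbf{1}_{v_2},\varrho_2)$ immediately upgrades the linear equivalence to $(\mathbf{1}_{v_1},\varrho_1)\leadsto(\mathbf{1}_{v_2},\varrho_2)$. Since the two configurations are one chip-and-rotor configurations, being reachable by a legal rotor-routing game means they lie in the same classical rotor-router orbit (recall from Section \ref{sec::rr_def} that a legal routing from a one chip-and-rotor configuration again yields a one chip-and-rotor configuration, and the classical process is deterministic).

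There is no real obstacle here, as both ingredients are already in place: the characterization of recurrent one chip-and-rotor configurations as unicycles, and the reachability-versus-linear-equivalence equivalence for recurrent targets. The only minor point worth stating carefully is that ``same orbit'' is symmetric; this is automatic since unicycles are recurrent, so if $(\mathbf{1}_{v_1},\varrho_1)\leadsto(\mathbf{1}_{v_2},\varrho_2)$, then continuing the legal (deterministic) game from $(\mathbf{1}_{v_2},\varrho_2)$ long enough brings us back to it, passing through $(\mathbf{1}_{v_1},\varrho_1)$ on the way.
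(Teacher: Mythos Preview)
Your proposal is correct and matches the paper's intended argument: the corollary is stated without proof immediately after Proposition~\ref{prop::diffuz_DRC_elerheto}, and the only ingredients needed are that unicycles are recurrent (Corollary~\ref{cor::rec_deg_one=unicycle}) and that legal routings are special unconstrained routings. Your remark about the symmetry of ``same orbit'' is a nice clarification but not strictly required.
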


Corollary \ref{cor::unicyc_orbit_jell} together with Proposition \ref{prop::lin_eqv_decidable} gives us the following:

\begin{prop}
It can be decided in polynomial time whether two unicycles lie in the same rotor-router orbit.
\end{prop}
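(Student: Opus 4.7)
The plan is to combine the two preceding results in the most direct way possible. Given two unicycles $(\mathbf{1}_{v_1},\varrho_1)$ and $(\mathbf{1}_{v_2},\varrho_2)$, Corollary \ref{cor::unicyc_orbit_jell} reduces the orbit-equality question to a linear equivalence question: the two unicycles lie in the same rotor-router orbit if and only if $(\mathbf{1}_{v_1},\varrho_1)\sim(\mathbf{1}_{v_2},\varrho_2)$. So the algorithm is simply: read the two unicycles from the input, and run the linear-equivalence decision procedure of Proposition \ref{prop::lin_eqv_decidable} on them.

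The correctness is immediate from Corollary \ref{cor::unicyc_orbit_jell}. For the running time, Proposition \ref{prop::lin_eqv_decidable} already gives a polynomial-time procedure: one first routes $(\mathbf{1}_{v_1},\varrho_1)$ to a configuration $(y,\varrho_2)$ sharing the target rotor configuration (computable in polynomial time by counting for each ordered pair $(u,v)$ how many chips traverse the multi-edge $\overrightarrow{uv}$), then tests $y\sim \mathbf{1}_{v_2}$ as graph divisors by checking solvability of $L_G z = \mathbf{1}_{v_2} - y$ over $\mathbb{Z}$ via Gaussian elimination (or Smith normal form).

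There is no genuine obstacle here; the work has been done in Corollary \ref{cor::unicyc_orbit_jell} and Proposition \ref{prop::lin_eqv_decidable}. The only thing worth emphasising is that the input size is measured in the bit-length of the description of $G$ (including the cyclic orders and edge multiplicities), so both the rotor-alignment step and the divisor-equivalence step must be carried out by manipulating multiplicities symbolically rather than by simulating individual routings, exactly as noted in the proof of Proposition \ref{prop::lin_eqv_decidable}.
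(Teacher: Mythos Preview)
Your proposal is correct and matches the paper's approach exactly: the paper derives this proposition in one line as an immediate consequence of Corollary~\ref{cor::unicyc_orbit_jell} and Proposition~\ref{prop::lin_eqv_decidable}. Your elaboration of the two steps (rotor-alignment followed by divisor-equivalence via Gaussian elimination) is faithful to the proof of Proposition~\ref{prop::lin_eqv_decidable} and adds nothing wrong.
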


\subsection{The number of unicycle-orbits}

In this section, we give a simple bijective proof for the fact that the number of unicycle-orbits of the classical rotor-routing process equals to the order of the Picard group. We note that this statement has been known, it follows from the combination of \cite[Theorem 1]{pham_rotor} and \cite[Theorem 2.10]{farrell-levine-coeulerian} and the author also gave a less direct proof for it in an earlier manuscript \cite[Proposition 2.6]{rotor_earlier}. However these previous proofs did not provide a bijection. Let us first state a technical lemma.
\begin{lemma} \label{lem::van_rekurrens}
 Each equivalence class of divisor-and-rotor configurations where the degree of the divisor is at least one contains a recurrent configuration.
\end{lemma}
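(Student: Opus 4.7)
The plan is to avoid explicitly constructing a recurrent representative of $(x,\varrho)$; instead, I show that legal rotor-routings starting from any $(x,\varrho)$ with $\deg(x)\ge 1$ can continue indefinitely while staying in a finite state space, so a pigeonhole argument yields a recurrent configuration in the same equivalence class.

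The first key point is that legal routings preserve the degree of the divisor, and $\deg(x)\ge 1$ forces at least one coordinate to be strictly positive (otherwise the sum would be $\le 0$). So throughout any legal game starting from $(x,\varrho)$ a legal routing is always available, and we can play an infinite legal game. The second key point is that the potential $\Phi(x)=\sum_{v}\max(-x(v),0)$ is non-increasing under legal routings: routing at $v$ requires $x(v)\ge 1$, so $x(v)-1\ge 0$ leaves $\Phi$'s contribution at $v$ unchanged (it was already zero), while the recipient vertex $v'$ only has its coordinate increased, which either leaves $\Phi$'s contribution at $v'$ unchanged (if $x(v')\ge 0$) or decreases it by one (if $x(v')<0$).

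Combining these two observations, at every time step during the legal game we have the identity $\sum_v|x(v)|=\deg(x)+2\Phi(x)$, bounded above by $\deg(x_0)+2\Phi(x_0)$, a constant depending only on the initial configuration $x_0$. Hence every coordinate of the divisor stays in a bounded interval, which, together with the finitely many rotor configurations on $G$, gives a finite state space of DRCs reachable from $(x,\varrho)$ by legal routings. By pigeonhole, an infinite legal game must revisit some configuration $(y,\sigma)$, and the segment of the game between two visits to $(y,\sigma)$ is a non-empty legal game transforming $(y,\sigma)$ back to itself, so $(y,\sigma)$ is recurrent by definition. Since any sequence of legal routings is also a sequence of unconstrained routings, $(x,\varrho)\sim(y,\sigma)$, proving the lemma. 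The only nontrivial ingredient is the non-increasing of $\Phi$ under legal routings, which yields the finite state space; the rest is standard.
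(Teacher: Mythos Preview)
Your proof is correct and follows essentially the same approach as the paper: both observe that $\deg(x)\ge 1$ guarantees a legal routing is always available, bound the reachable state space by showing legal routings cannot increase the total ``negativity'' (the paper phrases this via the non-increasing positive-part sum $\sum_v x(v)^+$, you via the equivalent non-increasing negative-part potential $\Phi$), and then apply pigeonhole to obtain a recurrent configuration in the same equivalence class. The packaging differs only cosmetically.
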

\begin{proof}
Take a DRC $(x, \varrho)$ with $\deg(x)=k\geq 1$. As $\deg(x)\geq 1$, there exists a vertex $v$ with $x(v)>0$. Make a routing at $v$. As the resulting divisor still has degree $k$, there is once again a vertex with positive number of chips. For this reason, we can play a legal game as long as we wish. 
Let $l=\sum_{v\in V(G)} x(v)^+$, where $x(v)^+=max\{0,x(v)\}$. 
If at the beginning, a vertex had $x(v)=t<0$, then after some legal routings, its number of chips is necessarily larger or equal to $t$. (While it is negative, it is never routed, and if it ever becomes nonnegative, it never again becomes negative.)
For the same reason, at any time, on any vertex, the number of chips is at most $l$. 

This means that there are only finitely many configurations we can reach from $(x, \varrho)$; therefore, after finitely many steps, we get some configuration for the second time. This one will be recurrent.
\end{proof}

\begin{coroll} \label{cor::DRC_osztalyok_es_rr_orbitok_szama}
 For a strongly connected digraph, the number of DRC-equivalence classes of degree one equals the number of rotor-router unicycle-orbits.
\end{coroll}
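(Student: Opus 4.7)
The plan is to exhibit an explicit bijection between the degree-one DRC-equivalence classes and the unicycle-orbits, with all the work already packaged in the three preceding results.

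First, I would observe that every degree-one equivalence class contains at least one unicycle. By Lemma \ref{lem::van_rekurrens}, every degree-one equivalence class contains some recurrent DRC $(x,\varrho)$. Since $\deg(x)=1$, Corollary \ref{cor::rec_deg_one=unicycle} tells us that $(x,\varrho)$ is a unicycle. So the assignment ``take an equivalence class and choose any unicycle inside it'' is nonempty on every class.

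Next I would define the map $\Phi$ sending a degree-one equivalence class $[(x,\varrho)]$ to the rotor-router orbit of any unicycle it contains. To see that $\Phi$ is well-defined, note that if $(\mathbf{1}_{v_1},\varrho_1)$ and $(\mathbf{1}_{v_2},\varrho_2)$ are two unicycles lying in the same DRC-equivalence class, then $(\mathbf{1}_{v_1},\varrho_1)\sim(\mathbf{1}_{v_2},\varrho_2)$, and Corollary \ref{cor::unicyc_orbit_jell} immediately places them in the same rotor-router orbit. Surjectivity of $\Phi$ is trivial: any unicycle orbit is the image of the equivalence class of any unicycle in it.

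Finally I would verify injectivity. Suppose two equivalence classes $[(\mathbf{1}_{v_1},\varrho_1)]$ and $[(\mathbf{1}_{v_2},\varrho_2)]$ (represented by unicycles, as we may by the first step) are sent to the same orbit. Then the unicycles can be transformed into each other by a legal rotor-routing game, which is in particular a sequence of unconstrained routings, so $(\mathbf{1}_{v_1},\varrho_1)\sim(\mathbf{1}_{v_2},\varrho_2)$ and the two classes coincide. There is no real obstacle here—the argument is essentially bookkeeping—because the substantive content (the equivalence of reachability and linear equivalence for recurrent targets, and the characterization of degree-one recurrent DRCs as unicycles) has already been established.
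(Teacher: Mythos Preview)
Your proof is correct and is exactly the argument the paper has in mind: the corollary is stated without proof there, since the bijection you spell out follows immediately from Lemma~\ref{lem::van_rekurrens}, Corollary~\ref{cor::rec_deg_one=unicycle}, and Corollary~\ref{cor::unicyc_orbit_jell}. Your write-up simply makes explicit the bookkeeping that the paper leaves to the reader.
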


\begin{prop} \label{prop::orbitszam_es_Picard}
For a strongly connected digraph $G$, the order of $\Pic^0(G)$ equals the number of rotor-router unicycle-orbits.
\end{prop}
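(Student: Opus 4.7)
My plan is to use Corollary \ref{cor::DRC_osztalyok_es_rr_orbitok_szama} to reduce the count of unicycle-orbits to the number of DRC-equivalence classes of degree one, and then to exhibit a bijection between these classes and $\Div^1(G)/\sim$, whose cardinality equals $|\Pic^0(G)|$.

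First I would fix a reference rotor configuration $\varrho_0$. The opening paragraph of the proof of Proposition \ref{prop::lin_eqv_decidable} (the routing vector $\alpha$) shows that from any DRC $(x,\varrho)$, performing finitely many unconstrained routings at each vertex (rotating $\varrho(v)$ to $\varrho_0(v)$) produces a DRC of the form $(x',\varrho_0)$ that is linearly equivalent to $(x,\varrho)$. I would then define a map $\varphi$ from DRC-equivalence classes of degree one to $\Div^1(G)/\sim$ by $\varphi([(x,\varrho)]) = [x']$. Lemma \ref{lem::div_ekv_jell} makes $\varphi$ well-defined and injective: if $(x_1,\varrho_1)$ and $(x_2,\varrho_2)$ route to $(x_1',\varrho_0)$ and $(x_2',\varrho_0)$ respectively, then they are DRC-equivalent iff $(x_1',\varrho_0)\sim(x_2',\varrho_0)$, which by the lemma is iff $x_1'\sim x_2'$ as divisors. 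Surjectivity is immediate, since $\varphi([(x,\varrho_0)]) = [x]$ for any $x\in\Div^1(G)$.

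It then remains to observe $|\Div^1(G)/\sim| = |\Pic^0(G)|$. Since firings preserve degree, $Im(L_G)\subseteq \Div^0(G)$, so translation by any fixed $x_0\in\Div^1(G)$ gives a bijection $\Div^0(G)\to\Div^1(G)$ sending $Im(L_G)$-cosets to linear equivalence classes. Hence $\Div^1(G)/\sim$ is a $\Pic^0(G)$-torsor and has cardinality $|\Pic^0(G)|$.

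The argument is essentially bookkeeping once Lemma \ref{lem::div_ekv_jell} and Corollary \ref{cor::DRC_osztalyok_es_rr_orbitok_szama} are in hand. The only mildly delicate point is verifying that the reduction to the fixed rotor $\varrho_0$ is achievable by genuine unconstrained routings rather than only abstractly, which is precisely what the routing vector $\alpha$ from Proposition \ref{prop::lin_eqv_decidable} guarantees.
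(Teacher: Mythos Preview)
Your proof is correct and follows essentially the same approach as the paper: both reduce via Corollary \ref{cor::DRC_osztalyok_es_rr_orbitok_szama} to counting DRC-equivalence classes of degree one, fix a reference rotor configuration, observe that every class contains a representative with that rotor configuration, and then invoke Lemma \ref{lem::div_ekv_jell} to identify such representatives up to equivalence with $\Div^1(G)/\sim$, which is in bijection with $\Pic^0(G)$ by translation. Your version is slightly more explicit in packaging this as a map $\varphi$ and in citing the routing vector $\alpha$ from Proposition \ref{prop::lin_eqv_decidable}, but the substance is the same.
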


\begin{proof}
 By Corollary \ref{cor::DRC_osztalyok_es_rr_orbitok_szama}, the number of rotor-router unicycle-orbits equals the number of DRC equivalence classes of degree one.

 The order of the Picard group is by definition the number of equivalence classes of degree zero divisors. The number of equivalence classes of degree zero divisors equals the number of equivalence classes of degree one divisors, as for an arbitrary fixed vertex $v$, $x\mapsto x+1_v$ is a bijection between degree zero and degree one divisors that is compatible with the linear equivalence.
 
 Hence we need to show that the number of DRC-equivalence classes of degree one equals the number of divisor equivalence classes of degree one. Let us denote the equivalence class of a divisor $x$ by $[x]$, and the equivalence class of a divisor-and-rotor configuration $(x,\varrho)$ by $[(x,\varrho)]$. 
Fix a rotor configuration $\varrho$. 
We show that the map $[x]\to [(x,\varrho)]$, where $x$ is a divisors of degree one, is well defined, and gives a bijection between divisor classes of degree one and DRC classes of degree one.

Lemma \ref{lem::div_ekv_jell} ensures that the mapping is well defined and injective. Moreover, each DRC equivalence class contains at least one DRC with rotor configuration $\varrho$, since from an arbitrary DRC we can route each vertex the required number of times. Hence the mapping is also surjective.
\end{proof}

\subsection{The rotor-router action}

In this section, let $G$ be an Eulerian digraph.
Holroyd et al.~\cite{Holroyd08} defined a group action of the Picard group on the spanning in-arborescences of the graph, using the rotor-router operation. 
We give an interpretation of this group action in terms of the equivalence classes of divisor-and-rotor configurations.

\begin{notation}
We denote the set of spanning in-arborescences of $G$ rooted at $r$ by $\Arb(G,r)$.
For a $T\in\Arb(G,r)$, let us denote by $T(v)$ the edge leaving node $v\neq r$.

For any fixed edge $\overrightarrow{rw}$, the following mapping $\varrho$ is a rotor configuration with exactly one cycle:
$$
\varrho(v) = \left\{\begin{array}{cl} T(v) & \text{if $v\neq r$,}  \\
         \overrightarrow{rw} & \text{if $v=r$}.
      \end{array} \right.
$$
Let us denote $\varrho=T\cup\overrightarrow{rw}$.
\end{notation}

\begin{definition}[Rotor-router action, \cite{Holroyd08}]\label{def::rr_action}
The \emph{rotor-router action} is defined with respect to a base vertex $r\in V(G)$ that we call the \emph{root}. It is a group action of $\Pic^0(G)$ on the spanning in-arborescences of $G$ rooted at $r$.
We denote by $x_r(T)$ the image of a $T\in\Arb(G,r)$ at the action of the divisor $x\in\Div^0(G)$.

$x_r(T)$ is defined as follows: Choose a divisor $x'\sim x$ such that $x'(v)\geq 0$ for each $v\neq r$. Such an $x'$ can easily be seen to exist.
Fix any out-edge $\overrightarrow{rw}$ of $r$.
Let $\varrho=T\cup\overrightarrow{rw}$. 
Start a legal rotor-routing game from $(x',\varrho)$,
such that $r$ is not allowed to be routed.
Continue until each chip arrives at $r$. Holroyd et al.~\cite{Holroyd08} shows that this procedure ends after finitely many steps, and in the final configuration $(\mathbf{0}_G,\varrho')$, the edges $\{\varrho'(v):v\in V(G)-r\}$ form a spanning in-arborescence of $G$ rooted at $r$. $x_r(T)$ is defined to be this arborescence.
\end{definition}

Holroyd et al.~\cite{Holroyd08} shows that for Eulerian digraphs, $x_r(T)$ is well defined, i.e.,~the definition does not depend on our choice of $x'$ and on the choice of the legal game. From this, it also follows that this is indeed a group action of $\Pic^0(G)$ on $\Arb(G,r)$, i.e.,~$x_r(T)=x'_r(T)$ if $x\sim x'$. Note that the choice of $w$ is immaterial in the construction.

Now we give an alternative definition of this group action using the notion of linear equivalence. First we need a technical lemma.

\begin{notation}
Let us call a divisor-and-rotor configuration \emph{$\overrightarrow{rw}$-good}, if it is of the form $(\mathbf{0_G}, \varrho)$, where $\varrho(r)=\overrightarrow{rw}$, and the edges $\{\varrho(v): v\in V(G)-r\}$ form a spanning in-arborescence of $G$ rooted at $r$. 
\end{notation}

\begin{lemma}
For a strongly connected Eulerian digraph $G$, vertex $r\in V(G)$ and edge $\overrightarrow{rw}\in E(G)$, in each DRC equivalence class of degree zero, there is exactly one $\overrightarrow{rw}$-good DRC.
\end{lemma}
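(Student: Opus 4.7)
The plan is a counting argument combined with existence in each class. Specifically, I will show (i) every degree-zero DRC equivalence class contains at least one $\overrightarrow{rw}$-good configuration, and (ii) the number of $\overrightarrow{rw}$-good DRCs equals the number of degree-zero DRC equivalence classes. Together these force the desired bijection.

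For (ii), both counts will equal $|\Pic^0(G)|$. On the one hand, the $\overrightarrow{rw}$-good DRCs are visibly in bijection with $\Arb(G,r)$: the rotor at $r$ is fixed to $\overrightarrow{rw}$, and the remaining rotors freely encode a spanning in-arborescence rooted at $r$. For an Eulerian strongly connected digraph, the matrix-tree theorem gives $|\Arb(G,r)| = |\Pic^0(G)|$ (here the Eulerian hypothesis is essential, since then $\per_G = \mathbf{1}_G$ and the reduced Laplacian determinant coincides with the order of $\Pic^0(G)$). On the other hand, the number of degree-zero DRC equivalence classes equals $|\Pic^0(G)|$ by the same argument used in the proof of Proposition \ref{prop::orbitszam_es_Picard}: fix any rotor configuration $\tau$; every DRC is equivalent to one with rotor $\tau$ (route each vertex enough times for its rotor to advance to $\tau$), and by Lemma \ref{lem::div_ekv_jell} two DRCs $(x_1,\tau)$ and $(x_2,\tau)$ are equivalent iff $x_1 \sim x_2$. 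Since routings preserve divisor degree, this yields the required bijection on the degree-zero part.

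For (i), take a degree-zero DRC $(x, \varrho)$. Using the standard fact that for a strongly connected digraph each divisor equivalence class has a representative $x' \sim x$ with $x'(v) \geq 0$ for all $v \neq r$, I replace $x$ by such an $x'$; this preserves the DRC equivalence class by Lemma \ref{lem::div_ekv_jell}. Next I perform a few unconstrained routings at $r$ to advance $\varrho(r)$ to $\overrightarrow{rw}$; since $G$ has no loops, each such routing moves a chip from $r$ to some $v \neq r$, which can only raise $x'(v)$, so the nonnegativity condition off $r$ is preserved. Call the resulting configuration $(x'', \varrho^*)$. Now apply the procedure of Definition \ref{def::rr_action}: perform legal routings at vertices $v \neq r$ (with $r$ forbidden) until no $v \neq r$ carries a chip. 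Holroyd et al.~guarantee that this terminates in finitely many steps at some $(\mathbf{0}_G, \varrho')$ with $\{\varrho'(v) : v \neq r\}$ a spanning in-arborescence rooted at $r$; and since $r$ was never routed, $\varrho'(r) = \overrightarrow{rw}$. Hence $(\mathbf{0}_G, \varrho')$ is $\overrightarrow{rw}$-good, and being reached from $(x, \varrho)$ by a sequence of unconstrained routings, it lies in the same DRC equivalence class.

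The main obstacle is really just the counting identity $|\Arb(G,r)| = |\Pic^0(G)|$ for Eulerian digraphs. I want to avoid circularity: appealing directly to Holroyd et al.'s result that the rotor-router action of $\Pic^0(G)$ on $\Arb(G,r)$ is simply transitive would be unsatisfactory, since the whole point of this lemma is to reinterpret that action through linear equivalence. Invoking the matrix-tree theorem together with the Eulerian reduction of $L_G$ is the cleanest route. Once the count is in hand, the pigeonhole step from existence to uniqueness is immediate.
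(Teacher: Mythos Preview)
Your argument is correct and takes a genuinely different route from the paper's.

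For uniqueness, the paper argues dynamically: it shifts to degree one via $(x,\varrho)\mapsto(x+\mathbf{1}_r,\varrho)$, so that $\overrightarrow{rw}$-good DRCs correspond to unicycles $(\mathbf{1}_r,\varrho)$ with $\varrho(r)=\overrightarrow{rw}$. Two equivalent such unicycles lie in the same rotor-router orbit by Corollary~\ref{cor::unicyc_orbit_jell}; running one full period, the chip visits $r$ exactly $d^+(r)\cdot\per_G(r)$ times, and the Eulerian hypothesis $\per_G(r)=1$ is precisely what forces the rotor at $r$ to assume the value $\overrightarrow{rw}$ only once during the orbit. Your counting argument replaces this orbit analysis with the identity $|\Arb(G,r)|=|\Pic^0(G)|$ (matrix-tree plus the Eulerian identification of the reduced-Laplacian cokernel with $\Pic^0(G)$). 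The paper's approach stays entirely within the machinery already developed and pinpoints where Eulerianness enters the dynamics; yours is more structural, arguably cleaner, and makes transparent why the statement should fail for non-Eulerian digraphs (the two counts diverge), at the cost of importing the matrix-tree theorem.

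One small remark on your existence step: the termination-and-arborescence fact you invoke from Holroyd et al.\ is stated in Definition~\ref{def::rr_action} only for initial rotor configurations of the form $T\cup\overrightarrow{rw}$ with $T$ an in-arborescence, whereas your $\varrho^*$ is arbitrary apart from $\varrho^*(r)=\overrightarrow{rw}$. The more general statement you need (any initial rotor configuration, $r$ acting as a sink, all chips eventually absorbed and final rotors forming an in-arborescence) is indeed in \cite{Holroyd08}, but you should cite it in that generality rather than through Definition~\ref{def::rr_action}.
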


\begin{proof}
Take a DRC equivalence class $C$ of degree zero. Let 
$$
C+\mathbf{1}_r=\{(x+\mathbf{1}_r,\varrho): (x,\varrho)\in C\}.
$$
This is a DRC equivalence class of degree one.
A configuration $(\mathbf{0}_G,\varrho)\in C$ is $\overrightarrow{rw}$-good if and only if $(\mathbf{1}_r,\varrho)\in C+\mathbf{1}_r$ is a unicycle with $\varrho(r)=\overrightarrow{rw}$. Thus, it is enough to show, that in each DRC equivalence class of degree one, there is exactly one unicycle $(\mathbf{1}_r,\varrho)$ where $\varrho(r)=\overrightarrow{rw}$.

By Lemma \ref{lem::van_rekurrens}, there exists a recurrent element in each DRC equivalence class of degree one, which is a unicycle by Corollary \ref{cor::rec_deg_one=unicycle}.
If we run the rotor-router process from this unicycle until it returns to the initial position, each vertex $v$ is visited $d^+(v)$ times by the chip. Therefore, $r$ is reached by the chip $d^+(r)$ times, and during these visits, the rotor at $r$ turns around. Hence there will be a moment, when the chip is at $r$, and the rotor at $r$ is $\overrightarrow{rw}$. As the rotor-router process takes unicycles to unicycles \cite[Lemma 3.3]{Holroyd08}, this is going to be a unicycle of the form $(\mathbf{1}_r,\varrho)$ where $\varrho(r)=\overrightarrow{rw}$.

Now suppose there are two linearly equivalent unicycles $(\mathbf{1}_r,\varrho_1)$ and $(\mathbf{1}_r,\varrho_2)$ with $\varrho_1(r)=\varrho_2(r)=\overrightarrow{rw}$. Then by 
Corollary \ref{cor::unicyc_orbit_jell}, they lie in the same rotor-router orbit. We get all elements of the orbit of $(\mathbf{1}_r,\varrho_1)$ by running the rotor-router process started from $(\mathbf{1}_r,\varrho_1)$ until it arrives back to $(\mathbf{1}_r,\varrho_1)$. During this process, the chip visits the vertex $r$ only $d^+(r)$ times, hence there is only one unicycle in the orbit that has its chip in $r$ with $\overrightarrow{rw}$ as rotor. As $(\mathbf{1}_r,\varrho_1)$ and $(\mathbf{1}_r,\varrho_2)$ are both in the orbit, this means that they must be the same unicycle.
\end{proof}

\begin{definition}[Alternative definition of the rotor-router action] \label{d:alt_def_rotor_action}
Let a divisor $x$ of degree zero act on a spanning in-arborescence $T$ rooted at $r\in V(G)$ as follows:

Fix a vertex $w$ such that $\overrightarrow{rw}\in E(G)$.
Let $\varrho=T \cup \overrightarrow{rw}$.

Let $(\mathbf{0}_G,\varrho')$ be the unique $\overrightarrow{rw}$-good DRC equivalent to $(x,\varrho)$. Let $T'$ be the spanning in-arborescence $\{\varrho'(v): v\in V(G)-r\}$.
Then let $T^x=T'$.
\end{definition}

\begin{prop}
$x_r(T)=T^x$ for any choice of $r\in V(G)$, $x\in \Div^0(G)$ and $T\in \Arb(G,r)$.
\end{prop}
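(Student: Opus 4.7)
The plan is to unpack the Holroyd construction of $x_r(T)$ and recognize the resulting configuration as an $\overrightarrow{rw}$-good DRC equivalent to $(x,\varrho)$, so that uniqueness from the preceding lemma forces it to be the one used in the definition of $T^x_r$.

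First I would set up both sides in a common framework. Fix $r$, $x$, $T$ and $\overrightarrow{rw}$, and let $\varrho=T\cup\overrightarrow{rw}$ as in the two definitions. Choose the divisor $x'\sim x$ with $x'(v)\geq 0$ for $v\neq r$ used in Definition~\ref{def::rr_action}, and let $(y,\varrho'')$ denote the final configuration of the legal rotor-routing game played from $(x',\varrho)$ in which $r$ is never routed, continued until all chips have arrived at $r$. By Lemma~\ref{lem::div_ekv_jell}, $(x,\varrho)\sim(x',\varrho)$; and since the Holroyd procedure is in particular a sequence of unconstrained routings, also $(x',\varrho)\sim(y,\varrho'')$. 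Hence $(x,\varrho)\sim(y,\varrho'')$.

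Next I would verify that $(y,\varrho'')$ is $\overrightarrow{rw}$-good. Since $r$ is never routed, $\varrho''(r)=\varrho(r)=\overrightarrow{rw}$. When the process terminates, $y(v)=0$ for every $v\neq r$, and because unconstrained routings preserve the degree and $\deg(x')=\deg(x)=0$, also $y(r)=0$; hence $y=\mathbf{0}_G$. Finally, Holroyd et al.~guarantee that the edges $\{\varrho''(v):v\neq r\}$ form a spanning in-arborescence of $G$ rooted at $r$, namely $x_r(T)$. So $(\mathbf{0}_G,\varrho'')$ is an $\overrightarrow{rw}$-good DRC equivalent to $(x,\varrho)$.

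To conclude, by the uniqueness statement in the lemma preceding the alternative definition of the rotor-router action, the equivalence class of $(x,\varrho)$ contains exactly one $\overrightarrow{rw}$-good DRC. On one hand this DRC equals $(\mathbf{0}_G,\varrho'')$ by the previous paragraph, so its associated in-arborescence is $x_r(T)$. On the other hand it is, by definition, $(\mathbf{0}_G,\varrho')$ with associated in-arborescence $T^x_r$. Therefore $x_r(T)=T^x_r$, as desired. The only subtlety, and arguably the main point, is the careful bookkeeping in the second paragraph that identifies the Holroyd endpoint as $\overrightarrow{rw}$-good; everything else is routine once Lemma~\ref{lem::div_ekv_jell} is invoked to connect divisor-level and DRC-level linear equivalence.
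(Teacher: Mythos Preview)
Your proof is correct and follows essentially the same route as the paper's: identify the output of the Holroyd procedure as an $\overrightarrow{rw}$-good DRC linearly equivalent to $(x,\varrho)$, and then invoke uniqueness. The only difference is that you spell out more of the bookkeeping (the use of Lemma~\ref{lem::div_ekv_jell} to pass from $x\sim x'$ to $(x,\varrho)\sim(x',\varrho)$, and the degree argument giving $y=\mathbf{0}_G$), whereas the paper absorbs these into a single sentence.
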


\begin{proof}
Let $\varrho=T \cup \overrightarrow{rw}$.
In the construction of Definition \ref{def::rr_action}, we obtain a DRC $(\mathbf{0_G}, \varrho')$ linearly equivalent to $(x,\varrho)$, where $\{\varrho'(v):v\in V(G)-r\}$ is a spanning in-arborescence. Moreover, since $r$ is not routed during the process, $\varrho'(r)=\overrightarrow{rw}$. Hence $(\mathbf{0_G}, \varrho')$ is $\overrightarrow{rw}$-good, so both definitions give the spanning in-arborescence $\{\varrho'(v): v\in V(G)-r\}$.
\end{proof}

\begin{thm}
For an Eulerian digraph G, given two spanning in-arbo\-res\-cen\-ces $T_1,T_2\in \Arb(G,r)$ and a divisor $x\in\Div^0(G)$, it can be decided in polynomial time whether $x_r(T_1)=T_2$.
\end{thm}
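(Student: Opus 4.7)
The plan is to reduce the question to the polynomial-time decidability of linear equivalence of divisor-and-rotor configurations established in Proposition \ref{prop::lin_eqv_decidable}. Concretely, I would first fix an arbitrary out-edge $\overrightarrow{rw}\in E(G)$ of $r$, and then form the two rotor configurations $\varrho_1=T_1\cup\overrightarrow{rw}$ and $\varrho_2=T_2\cup\overrightarrow{rw}$. The key observation is that by the alternative definition of the rotor-router action, $x_r(T_1)=T_2$ holds if and only if $(\mathbf{0}_G,\varrho_2)$ is the unique $\overrightarrow{rw}$-good DRC equivalent to $(x,\varrho_1)$. Since $T_2\in\Arb(G,r)$, the DRC $(\mathbf{0}_G,\varrho_2)$ is itself $\overrightarrow{rw}$-good, and the uniqueness of the $\overrightarrow{rw}$-good representative in each equivalence class (the lemma preceding the alternative definition) means that the statement $x_r(T_1)=T_2$ is equivalent to simply
\[
(x,\varrho_1)\sim (\mathbf{0}_G,\varrho_2).
\]

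The algorithm is then: build $\varrho_1$ and $\varrho_2$ from the input (a linear-time operation once an out-edge $\overrightarrow{rw}$ has been chosen), and invoke the procedure from Proposition \ref{prop::lin_eqv_decidable} to decide whether $(x,\varrho_1)\sim(\mathbf{0}_G,\varrho_2)$. Return ``yes'' exactly in this case. Correctness follows from the equivalence above, and the total running time is polynomial because the construction of the rotor configurations is polynomial in the input size, and the linear-equivalence test is polynomial by Proposition \ref{prop::lin_eqv_decidable}.

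There is no real obstacle here: the conceptual content has already been absorbed into the alternative definition of the rotor-router action and into the polynomial-time decidability of DRC linear equivalence. The only points to verify carefully are that $(\mathbf{0}_G,\varrho_2)$ is indeed $\overrightarrow{rw}$-good (immediate from $T_2$ being a spanning in-arborescence rooted at $r$ and $\varrho_2(r)=\overrightarrow{rw}$) and that the reduction is well-defined independently of the choice of $w$, which is already noted after Definition \ref{def::rr_action}.
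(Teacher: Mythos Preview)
Your proposal is correct and follows essentially the same approach as the paper: fix an out-edge $\overrightarrow{rw}$, reduce $x_r(T_1)=T_2$ to the linear equivalence $(x,T_1\cup\overrightarrow{rw})\sim(\mathbf{0}_G,T_2\cup\overrightarrow{rw})$, and invoke Proposition~\ref{prop::lin_eqv_decidable}. The paper's proof is just a terser version of yours, omitting the explicit justification via the uniqueness of the $\overrightarrow{rw}$-good representative.
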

\begin{proof}
One needs to check whether for an out-edge $\overrightarrow{rw}$ of $r$, $(x,T_1\cup\overrightarrow{rw})\sim(\mathbf{0}_G,T_2\cup\overrightarrow{rw})$. This can be done in polynomial time by Proposition \ref{prop::lin_eqv_decidable}.
\end{proof}

\subsubsection{Base-point independence of the rotor-router action}
Let us turn to undirected graphs (that we simultaneously imagine as directed graphs where each undirected edge is replaced by two oppositely directed edges). For undirected graphs, the spanning in-arborescences with root $r$ are in one-to-one correspondence with the spanning trees. Therefore, we can think of the rotor-router action with base point $r$ as an action on the spanning trees of the graph. Since now the rotor-router action with any base vertex acts on the same set of objects, one can ask for which ribbon graphs is the action independent of the base vertex.
Chan, Church and Grochow \cite{Chan15} shows that the rotor-router action is independent of the base vertex if and only if the ribbon graph is planar. (Planarity for a ribbon graph means that the ribbon graph structure gives a combinatorial embedding of the graph into the plane.)
Their proof proceeds in two steps. First they show the following:

\begin{notation}
For a rotor configuration $\varrho$, let $\overleftarrow{\varrho}$ be the rotor configuration in which each rotor cycle is reversed, and all other rotors are left the same. Let us call this the reversal of the rotor configuration. See Figure 1 for an example.
\end{notation}

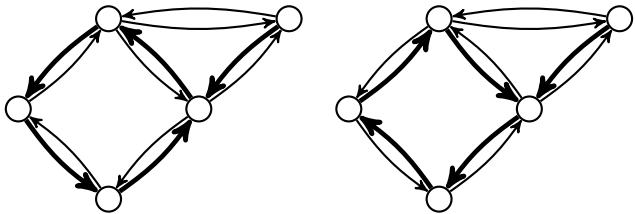
\begin{figure}[ht] \label{fig::reversal}
\begin{center}
\begin{tikzpicture}[->,>=stealth',auto,scale=1.2,
                    thick,every node/.style={circle,draw,font=\sffamily\small}]
  \node[] (1) at (1, 0) {};  
  \node[] (2) at (0, 1) {};
  \node[] (3) at (-1, 0) {};  
  \node[] (4) at (0, -1) {};
  \node[] (5) at (2, 1) {};
  \path[every node/.style={font=\sffamily\small},line width=1.8pt]
    (1) edge [bend right=10] node {} (2)
    (2) edge [bend right=10] node {} (3)
    (3) edge [bend right=10] node {} (4)
    (4) edge [bend right=10] node {} (1)
    (5) edge [bend right=10] node {} (1);
  \path[every node/.style={font=\sffamily\small},line width=0.8pt]
    (2) edge [bend right=10] node {} (1)
    (3) edge [bend right=10] node {} (2)
    (4) edge [bend right=10] node {} (3)    
    (1) edge [bend right=10] node {} (4)    
    (1) edge [bend right=10] node {} (5)
    (5) edge [bend right=10] node {} (2)    
    (2) edge [bend right=10] node {} (5);  
\end{tikzpicture}
\hspace{0.2cm}
\begin{tikzpicture}[->,>=stealth',auto,scale=1.2,
                    thick,every node/.style={circle,draw,font=\sffamily\small}]
  \node[] (1) at (1, 0) {};  
  \node[] (2) at (0, 1) {};
  \node[] (3) at (-1, 0) {};  
  \node[] (4) at (0, -1) {};
  \node[] (5) at (2, 1) {};
  \path[every node/.style={font=\sffamily\small},line width=1.8pt]
    (2) edge [bend right=10] node {} (1)
    (3) edge [bend right=10] node {} (2)
    (4) edge [bend right=10] node {} (3)    
    (1) edge [bend right=10] node {} (4)
    (5) edge [bend right=10] node {} (1);
  \path[every node/.style={font=\sffamily\small},line width=0.8pt]
    (1) edge [bend right=10] node {} (2)
    (2) edge [bend right=10] node {} (3)
    (3) edge [bend right=10] node {} (4)
    (4) edge [bend right=10] node {} (1)
    (5) edge [bend right=10] node {} (1)
    (1) edge [bend right=10] node {} (5)
    (5) edge [bend right=10] node {} (2)    
    (2) edge [bend right=10] node {} (5); 
\end{tikzpicture}
\end{center}
\caption{A rotor configuration and its reversal.
The rotor edges are drawn by thick lines. }
\end{figure}

\begin{prop}\cite{Chan15}
A connected ribbon graph $G$ without loops is planar if and only if for any unicycle $(\mathbf{1}_v,\varrho)$, $(\mathbf{1}_v,\varrho)\leadsto(\mathbf{1}_v,\overleftarrow{\varrho})$.
\end{prop}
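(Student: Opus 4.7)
The plan is to translate the question into a purely combinatorial linear-equivalence question using the tools developed earlier, and then attack each direction separately by reading off what the ribbon structure implies about the existence or non-existence of a suitable routing vector.

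First, by Corollary \ref{cor::unicyc_orbit_jell}, $(\mathbf{1}_v,\varrho)\leadsto (\mathbf{1}_v,\overleftarrow{\varrho})$ holds if and only if $(\mathbf{1}_v,\varrho)\sim(\mathbf{1}_v,\overleftarrow{\varrho})$, i.e.\ if and only if there is an unconstrained routing vector $r\in\mathbb{N}^{V(G)}$ realizing the transition. Since the non-cycle rotors of $\varrho$ and $\overleftarrow{\varrho}$ coincide and only the cycle is reversed, such an $r$ is constrained as follows: for each $u$ not on the rotor cycle $C$ of $\varrho$, $r(u)$ must be a multiple of $d^+(u)$; for each $v_i\in V(C)$, $r(v_i)\equiv \alpha_i\pmod{d^+(v_i)}$, where $\alpha_i$ is the number of positions in the cyclic order at $v_i$ from the cycle-forward edge $\varrho(v_i)$ to the cycle-backward edge $\overleftarrow{\varrho}(v_i)$. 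In addition, the chip displacement induced by $r$ must be zero, so that the chip returns to $v$. Thus the proposition reduces to: planarity of the ribbon graph is equivalent to the solvability, for every unicycle, of this constrained system over $\mathbb{N}^{V(G)}$.

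For the ``if'' (planar $\Rightarrow$ reversibility) direction, I would use the planar embedding to build $r$ explicitly. In a planar ribbon graph the rotor cycle $C$ bounds a disk $F$. The key geometric fact is that at each $v_i\in V(C)$, the $\alpha_i$ consecutive out-edges leaving $v_i$ that are ``swept over'' when the rotor advances from $\varrho(v_i)$ to $\overleftarrow{\varrho}(v_i)$ all point into $F$ (this is essentially the compatibility of the cyclic orders with the embedding). Therefore, doing those $\alpha_i$ routings at each $v_i$ deposits chips only on vertices of $F$. One then routes each interior vertex $u\in F$ a suitable multiple of $d^+(u)$ times (using the in-arborescence rooted in $V(C)$ and living inside $F$ to propagate these chips back to the boundary) so that eventually all chips return to $v$ and the only net rotor change is the reversal of $C$. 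This is essentially a chip-firing computation inside the disk $F$ plus the prescribed residues on $V(C)$, and planarity is exactly what is needed to keep the bookkeeping consistent.

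For the ``only if'' (reversibility $\Rightarrow$ planar) direction, I would argue contrapositively: if the ribbon graph is non-planar, exhibit a unicycle whose rotor cycle $C$ is non-separating in the ribbon surface. The strategy is to produce an obstruction $\Phi$ on DRC equivalence classes of degree one that is invariant under unconstrained routings but takes different values at $(\mathbf{1}_v,\varrho)$ and $(\mathbf{1}_v,\overleftarrow{\varrho})$. A natural candidate is a linking-type functional coming from intersection numbers of the rotor cycle with a dual cycle in the higher-genus surface, or more concretely, an integer obtained by pairing the residue class of $r$ modulo out-degrees with a suitable cocycle on $G$; on a non-separating cycle this pairing must be nonzero in $\overleftarrow{\varrho}$ relative to $\varrho$, while on every planar cycle it vanishes.

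The main obstacle is the second direction: constructing the invariant $\Phi$ and verifying its invariance under unconstrained routings. Once such a $\Phi$ is in hand the argument is short, but finding the right object --- essentially a discrete analogue of intersection pairing on the ribbon surface --- is the real work and is where I would expect the proof of Chan--Church--Grochow to do something clever beyond the linear-algebra reductions used so far.
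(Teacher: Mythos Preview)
The paper does not prove this proposition at all: it is quoted verbatim from Chan, Church and Grochow \cite{Chan15} and used as a black box. The paper's own contribution is a new proof of the \emph{second} proposition (equivalence of reversibility with base-point independence), not of this one. So there is no ``paper's own proof'' to compare your attempt against.

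As for the content of your sketch: first, your labelling of the two directions is reversed --- in ``planar if and only if reversible'', the ``if'' part is reversibility $\Rightarrow$ planar, not the other way round. More substantively, your argument for planar $\Rightarrow$ reversible has the right geometric picture (the rotor cycle bounds a disk and the swept edges lie inside it), but the step ``route each interior vertex a suitable multiple of $d^+(u)$ times \dots\ so that eventually all chips return to $v$'' hides real work; you need an actual chip-count showing the net displacement vanishes, and the in-arborescence alone does not give it. For the converse, you openly concede that constructing the invariant $\Phi$ is ``the real work'' and do not do it; this is precisely the nontrivial part of \cite{Chan15}, where they use the genus of the ribbon surface and an explicit non-separating cycle to build a unicycle that cannot be reversed. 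So your proposal is a reasonable outline of where the difficulties lie, but it is not a proof, and in particular the second direction contains a named but unfilled gap.
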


The second step in their proof is to show that the rotor-router action is independent of the base vertex if and only if for any unicycle $(\mathbf{1}_v,\varrho)$, $(\mathbf{1}_v,\varrho)\leadsto(\mathbf{1}_v,\overleftarrow{\varrho})$. We give a simple proof for this second statement using the interpretation of the rotor-routing action in terms of the equivalence classes of divisor-and-rotor configurations.

\begin{prop} \cite{Chan15}
The rotor-router action is independent of the base vertex if and only if for any unicycle $(\mathbf{1}_v,\varrho)$, $(\mathbf{1}_v,\varrho)\leadsto(\mathbf{1}_v,\overleftarrow{\varrho})$.
\end{prop}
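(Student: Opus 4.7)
The plan is to use the alternative (linear equivalence) description of the rotor-router action together with Corollary \ref{cor::unicyc_orbit_jell} to translate both sides of the claimed equivalence into statements about linear equivalence of $\overrightarrow{rw}$-good DRCs. Since $\Arb(G,r)$ is in bijection with the DRC equivalence classes of degree zero via the $\overrightarrow{rw}$-good representatives, and the action of $\Pic^0(G)$ on $\Arb(G,r)$ is a pullback of translation, base-point independence for a pair $(r_1,r_2)$ amounts, under the identification of in-arborescences with spanning trees, to the statement that the two maps $\overline{\Phi}_i\colon\overleftrightarrow{T}\mapsto[(\mathbf{0}_G,T_{r_i}\cup\overrightarrow{r_iw_i})]$ differ by a constant on the set of spanning trees. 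Changing the auxiliary out-edge $w_i$ only shifts $\overline{\Phi}_i$ by a constant, so the choice is immaterial; and summing such constants along an undirected path in $G$ shows that it suffices to verify the statement for adjacent $r_1,r_2$.

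I would then fix adjacent $r_1,r_2$, pick $w_1=r_2$ and $w_2=r_1$, and split into two cases according to whether the spanning tree $\overleftrightarrow{T}$ contains the edge $r_1r_2$. If it does, then $T_{r_1}\cup\overrightarrow{r_1r_2}=T_{r_2}\cup\overrightarrow{r_2r_1}$ as rotor configurations: both assign $\overrightarrow{r_1r_2}$ to $r_1$ and $\overrightarrow{r_2r_1}$ to $r_2$, and for any other vertex $u$ the first tree-edge toward $r_1$ equals the first tree-edge toward $r_2$, since the two tree-paths differ only in their final edge $r_1r_2$. Hence $\overline{\Phi}_1=\overline{\Phi}_2$ on such trees; as any edge of a connected graph lies in some spanning tree, the constant difference (if it exists) must be zero, so base-point independence at $(r_1,r_2)$ reduces to proving $\overline{\Phi}_1(\overleftrightarrow{T})=\overline{\Phi}_2(\overleftrightarrow{T})$ also on trees with $r_1r_2\notin\overleftrightarrow{T}$.

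For such a $\overleftrightarrow{T}$ let $r_1=u_0,u_1,\ldots,u_k=r_2$ be the tree-path, necessarily of length $k\ge 2$. A direct verification shows that $\varrho:=T_{r_1}\cup\overrightarrow{r_1r_2}$ is a unicycle rotor configuration whose unique directed cycle is $r_1\to u_k\to u_{k-1}\to\cdots\to u_1\to r_1$, and that $T_{r_2}\cup\overrightarrow{r_2r_1}=\overleftarrow{\varrho}$: the cycle is reversed and the off-cycle rotors are preserved, because for any vertex $u$ off the cycle the first tree-edge from $u$ toward $r_1$ equals the first tree-edge from $u$ toward $r_2$. Therefore $\overline{\Phi}_1(\overleftrightarrow{T})=\overline{\Phi}_2(\overleftrightarrow{T})$ is equivalent to $(\mathbf{0}_G,\varrho)\sim(\mathbf{0}_G,\overleftarrow{\varrho})$, which, by adding $\mathbf{1}_{r_1}$ to both sides (an operation preserving linear equivalence), is equivalent to $(\mathbf{1}_{r_1},\varrho)\sim(\mathbf{1}_{r_1},\overleftarrow{\varrho})$, and since both sides are unicycles this is in turn equivalent by Corollary \ref{cor::unicyc_orbit_jell} to $(\mathbf{1}_{r_1},\varrho)\leadsto(\mathbf{1}_{r_1},\overleftarrow{\varrho})$.

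It remains to note that every unicycle $(\mathbf{1}_v,\sigma)$ arises in this way: take $r_1=v$, $r_2$ to be the head of $\sigma(v)$, and $\overleftrightarrow{T}$ to be the underlying tree of $\sigma\setminus\{\sigma(v)\}$. Two-cycles automatically satisfy $\sigma=\overleftarrow{\sigma}$, and cycles of length at least three land exactly in the case $r_1r_2\notin\overleftrightarrow{T}$ discussed above. So as $(r_1,r_2)$ ranges over all adjacent pairs, the quantified condition ranges over all unicycles, giving the claimed equivalence. The main technical step is the case-II identification $T_{r_2}\cup\overrightarrow{r_2r_1}=\overleftarrow{T_{r_1}\cup\overrightarrow{r_1r_2}}$; although intuitive, it requires care in tracking how the reversal of the fundamental cycle interacts with the off-cycle in-arborescences.
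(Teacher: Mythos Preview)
Your argument is correct and rests on the same key identity as the paper's proof, namely that for adjacent $v,w$ one has $\overleftarrow{T_v\cup\overrightarrow{vw}}=T_w\cup\overrightarrow{wv}$; you simply split this into the two cases $vw\in\overleftrightarrow{T}$ (where the unique cycle is a 2-cycle and the identity is trivial) and $vw\notin\overleftrightarrow{T}$, whereas the paper states and uses it uniformly. Both proofs also reduce to adjacent base points in the same way and invoke Corollary~\ref{cor::unicyc_orbit_jell} to pass between $\leadsto$ and $\sim$.

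The genuine difference is in packaging. You phrase base-point independence for a pair $(r_1,r_2)$ as the torsor statement that the two classifying maps $\overline{\Phi}_i$ differ by a constant, then use Case~I to pin the constant to zero; this lets you run the ``only if'' direction by pure contraposition. The paper instead, given a bad unicycle $(\mathbf{0}_G,\varrho)\not\sim(\mathbf{0}_G,\overleftarrow{\varrho})$, manufactures an explicit witness: it routes at $w$ (the head of $\varrho(v)$) until the rotor there becomes $\overrightarrow{wv}$, calls the resulting DRC $(x,\varrho')$, sets $T'$ to be the tree underlying $\varrho'\setminus\{\varrho'(w)\}$, and then checks directly that $x_v(T')=T$ but $x_w(T')\neq T$. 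Your torsor argument is cleaner and avoids this construction; the paper's version is more concrete and exhibits an actual divisor $x$ on which the two actions disagree. Either way, the mathematical content is the same identity $\overleftarrow{T_v\cup\overrightarrow{vw}}=T_w\cup\overrightarrow{wv}$, and your verification that every unicycle (up to the trivial 2-cycle case) arises from some Case~II triple $(r_1,r_2,\overleftrightarrow{T})$ is exactly what is needed to close the equivalence.
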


\begin{proof}
First we show the ``if'' part.
From Proposition \ref{prop::diffuz_DRC_elerheto}, the fact that for any unicycle $(\mathbf{1}_v,\varrho)$, $(\mathbf{1}_v,\varrho)\leadsto(\mathbf{1}_v,\overleftarrow{\varrho})$ is equivalent to the fact that for any unicycle $(\mathbf{1}_v,\varrho)$, $(\mathbf{1}_v,\varrho)\sim(\mathbf{1}_v,\overleftarrow{\varrho})$, which is in turn equivalent to the fact that 
\begin{equation*} \label{eq::megfordithatosag}
\textrm{for any DRC $(0_G,\varrho)$, where $\varrho$ has exactly one cycle, $(\mathbf{0}_G,\varrho)\sim(\mathbf{0}_G,\overleftarrow{\varrho})$.}
\end{equation*}
Note also, that this condition implies $(x,\varrho)\sim(x,\overleftarrow{\varrho})$ for any $x\in \Div(G)$. 

Since our graph is connected, it is enough to show, that for any two adjacent vertices $v,w\in V(G)$, the rotor-router action with base vertex $v$ equals the rotor-router action with base vertex $w$.

For a vertex $u$ and spanning tree $T$, let us denote by $T_u$ the spanning in-arborescence rooted at $u$ that we get by orienting each edge of $T$ towards $u$.

Take any spanning tree $T$ of $G$, and a divisor $x\in \Div^0(G)$. 
$T_v\cup \overrightarrow{vw}$ is a rotor-configuration, since $v$ and $w$ are adjacent.
By Definition \ref{d:alt_def_rotor_action}, $x_v(T)=T'$ where $(\mathbf{0}_G, T'_v\cup \overrightarrow{vw})\sim (x, T_v\cup \overrightarrow{vw})$.
As $\overleftarrow{T_v\cup \overrightarrow{vw}}=T_w\cup \overrightarrow{wv}$ and $\overleftarrow{ T'_v\cup \overrightarrow{vw}}=T'_w\cup \overrightarrow{wv}$, we have
$(\mathbf{0}_G, T'_v\cup \overrightarrow{vw})\sim (\mathbf{0}_G, T'_w\cup \overrightarrow{wv})$ and 
$(x, T_v\cup \overrightarrow{vw})\sim (x, T_w\cup \overrightarrow{wv})$. Hence by transitivity, 
$(\mathbf{0}_G, T'_w\cup \overrightarrow{wv})\sim (x, T_w\cup \overrightarrow{wv})$. Thus $x_w(T)=T'$.

Now we show that if we have a DRC $(\mathbf{0}_G,\varrho)$, where $\varrho$ has exactly one cycle, such that $(\mathbf{0}_G,\varrho)\not\sim(\mathbf{0}_G,\overleftarrow{\varrho})$, then there exists $v,w\in V(G)$, $x\in\Div^0(G)$ and a spanning tree $T$, such that $x_v(T)\neq x_w(T)$.

Let $v$ be a vertex on the cycle of $\varrho$, and let $w$ be the vertex such that $\varrho(v)=\overrightarrow{vw}$. Then $w$ is also on the cycle. Let $T$ be the spanning tree we get by forgetting the orientations of $\{\varrho(u): u\in V(G)-v \}$. Take $(\mathbf{0}_G, \varrho)$, and route at $w$ until the rotor at $w$ becomes $\overrightarrow{wv}$. Let the DRC at this moment be $(x,\varrho')$. Then $(x,\varrho')\sim (\mathbf{0}_G,\varrho)$ by its construction.
Let $T'$ be the subgraph that we get by forgetting the orientations of $\{\varrho'(u) :u\in V(G)-w\}=\{\varrho(u) :u\in V(G)-w\}$. $T'$ is a spanning tree, since $\varrho$ has one cycle, and $w$ is on this cycle.
Note that $T'_v\cup \overrightarrow{vw}=T'_w\cup \overrightarrow{wv}$.

As $(x,T'_v\cup \overrightarrow{vw})=(x,\varrho')\sim (\mathbf{0}_G,\varrho)=(\mathbf{0}_G,T_v\cup \overrightarrow{vw})$, $x_v(T')=T$.
On the other hand, if $x_w(T')=T$ were true, that would mean, using also the previous equivalence, that  
$(\mathbf{0}_G,\varrho)\sim(x,T'_v\cup \overrightarrow{vw})=(x,T'_w\cup \overrightarrow{wv})\sim (\mathbf{0}_G, T_w \cup \overrightarrow{wv})=(\mathbf{0}_G,\overleftarrow{\varrho})$, contradicting our assumption.
\end{proof}

\section*{Acknowledgement}
Research was supported by the MTA-ELTE Egerv\'ary Research Group and by the
Hungarian Scientific Research Fund - OTKA, K109240.

We would like to thank Viktor Kiss for his helpful comments about the manuscript.

\bibliographystyle{abbrv}
\bibliography{rotor_r}

\end{document}